\documentclass{amsart}
\usepackage{amsthm,amscd,amssymb,verbatim,epsf,amsmath,eurosym,amsfonts,mathrsfs,graphicx, mdframed, csquotes}
\usepackage[inline]{enumitem}
\usepackage[colorlinks=true,linkcolor=blue,citecolor=blue]{hyperref}

\DeclareMathOperator{\spn}{span}
\DeclareMathOperator{\sgn}{sgn}
\DeclareMathOperator{\Tr}{Tr}
\DeclareMathOperator{\Det}{Det}

\begin{document}
\theoremstyle{plain}
\newtheorem{Thm}{Theorem}
\newtheorem{Cor}[Thm]{Corollary}
\newtheorem{Ex}[Thm]{Example}
\newtheorem{Con}[Thm]{Conjecture}
\newtheorem{Main}{Main Theorem}
\newtheorem{Lem}[Thm]{Lemma}
\newtheorem{Prop}[Thm]{Proposition}

\theoremstyle{definition}
\newtheorem{Def}[Thm]{Definition}
\newtheorem{Note}[Thm]{Note}
\newtheorem{Question}[Thm]{Question}

\theoremstyle{remark}
\newtheorem{notation}[Thm]{Notation}
\renewcommand{\thenotation}{}

\errorcontextlines=0
\numberwithin{equation}{section}
\numberwithin{Thm}{section}

\title[Conformal Mean Value Theorem]
   {A Conformal Mean Value Theorem for Solutions of the Ultrahyperbolic Equation}
\author{Guillem Cobos}
\email{guillem.cobos93@gmail.com}
\author{Brendan Guilfoyle}
\address{
          School of STEM\\
          Munster Technological University, Kerry \\
          Clash \\
          Tralee  \\
          Co. Kerry \\
          Ireland.}
\email{brendan.guilfoyle@mtu.ie}
\begin{abstract} 

Asgeirsson's theorem establishes a mean value property for solutions of the ultrahyperbolic equation. In the case of four variables, it states that the integrals of a solution over certain pairs of conjugate circles are the same. In this paper, the invariance of the four dimensional ultrahyperbolic equation under conformal maps of the pseudo-Euclidean space of signature 2+2 is used to get the most general version of the mean value theorem. 

The name non-degenerate conjugate conics is used for the most general pairs of curves over which solutions of the ultrahyperbolic equation enjoy the mean value property. These are proven to be pairs of conic sections, so that, in addition to conjugate circles which were known to exist, the picture is completed by finding mean value theorems over conjugate hyperbolae, conjugate parabolae, and line-empty pairs. 

In addition, Fritz John established a link between conjugate circles and the two rulings of a hyperboloid of revolution. The line incidence property of doubly ruled surfaces is used to prove a one-to-one correspondence between non-degenerate conjugate conics and pairs of rulings of doubly ruled surfaces in Euclidean 3-space.
\end{abstract}

\keywords{ultrahyperbolic equation, Asgeirsson's theorem, John's equation, X-ray transform.}
\subjclass[2010]{53A25,35Q99}
\date{\today}
\maketitle

\section{Introduction and results}

A differentiable function $u(x_1,x_2,x_3,x_4)$ of four real variables is said to be a solution of the ultrahyperbolic equation if it satisfies

\begin{equation}\label{UHE}
\frac{\partial^2 u}{\partial x_1^2} + \frac{\partial^2 u}{\partial x_2^2} -
\frac{\partial^2 u}{\partial x_3^2} -
\frac{\partial^2 u}{\partial x_4^2} = 0.
\end{equation}

This equation is the compatibility condition for a function on line space to be the integral over lines of a function on ${\mathbb R}^3$. The X-ray transform, in which a real-valued function on ${\mathbb R}^3$ is mapped to its integral over lines, is thus overdetermined, and the equation arises in tomography where one seeks to invert the X-ray transform on line data \cite{conebeam} \cite{fourierrebinning} \cite{defrisetliu} \cite{lichen}. 

In a previous paper \cite{us}, the authors considered the flat ultrahyperbolic equation (\ref{UHE}) as the Laplace equation of the canonical conformally flat neutral metric on the space of oriented lines, utilizing the ambient conformal geometry to analyze its solutions. Conformal transformations of this metric correspond to projective transformations of ${\mathbb R}^3$, thus linking the geometries in the two spaces. While many of these geometric ideas can be extended to higher dimension, the case of 4-dimensions is of particular interest, both for its physical significance and mathematical elegance.

As a partial differential equation, the ultrahyperbolic equation is neither elliptic nor hyperbolic and many standard PDE techniques originating in the standard Laplace equation or wave equation do not apply. Asgeirsson \cite{LA} showed, however, that solutions of the ultrahyperbolic equation do satisfy a mean value property. In particular,

\begin{equation} \label{asgeirsson}
\int_0^{2\pi} u(a+r\cos\theta, b+r\sin\theta, c, d) \ d\theta
= 
\int_0^{2\pi} u(a,b,c+r\cos\theta, d+r\sin\theta) \ d\theta,
\end{equation}
for all $a,b,c,d\in\mathbb R$ and $r>0$. The mean value theorem itself has applications in image reconstruction \cite{lightfieldcoordinates}.
The intention of this paper is to determine the most general pair of curves over which the mean value theorem holds for solutions of the ultrahyperbolic equation. This extends both the work of Fritz John \cite{fjohn} and of the authors \cite{us}. 

John \cite{fjohn} claims that one can obtain the most general form of Asgeirsson's theorem by applying a linear map leaving the neutral quadratic form 
\[
||\underline x||^2 = x_1^2+x_2^2-x_3^2-x_4^2,
\]
invariant. However, we will show that this can in fact be improved by the use of {\em conformal} maps of the associated pseudo-Riemmanian metric $g=dx_1^2+dx_2^2-dx_3^2-dx_4^2$. Throughout the notation $\mathbb R^{2,2}$ will be used for the pseudo-Riemannian manifold $(\mathbb R^4, g)$.

Setting $a=b=c=d=0$ and $r=1$ in  equation (\ref{asgeirsson}) we obtain a pair of circles 
\begin{align*}
S_0 & = \{ (\alpha_1, \alpha_2,0,0) \ | \ \alpha_1^2 + \alpha_2^2 =1 \}\ \\
S_0^\perp & = \{ (0,0, \beta_1, \beta_2) \ | \ \beta_1^2 + \beta_2^2 =1 \},
\end{align*}
referred to as \emph{standard conjugate circles}. By applying conformal maps to the pair of curves $S_0$ and $S_0^\perp$ one obtains new pairs of curves over which a mean value theorem holds for solutions of the ultrahyperbolic equation. Following John, such pairs of curves arising as the conformal image of $S_0,S_0^\perp$ are called \emph{non-degenerate conjugate conics}. 

Given a pair of non-degenerate conjugate conics $S,S^\perp$ there always exists a unique pair of affine $2$-planes $\pi,\pi^\perp\subset \mathbb R^{2,2}$ such that $S$ is contained in $\pi$ and $S^\perp$ is contained in $\pi^\perp$. We provide a classification of  non-degenerate conjugate conics depending on the signature of the induced metric on the containing planes by the pseudo-Riemmanian metric $g$ \cite[Definition 3.2]{us}. Here a plane is said to be {\it definite}, {\it indefinite} or {\it degenerate} according to whether the metric induced on the plane by the ambient metric is definite, indefinite or degenerate, respectively.

The following notation is used to refer to the hypersphere in $\mathbb R^{2,2}$ centered at $\underline a\in\mathbb R^{2,2}$ and with radius-square $\pm r^2$,
$$H_{\underline a, \pm r^2}:=\{\underline x\in\mathbb R^{2,2} \ | \ ||\underline x-\underline a||^2=\pm r^2\}.$$
The notation $C_{\underline a}$ is used to refer to the hypersphere of zero radius $r=0$ with center at $\underline a\in\mathbb R^{2,2}$.
With that, all pairs of non-degenerate conjugate conics are listed in the following result:

\vspace{0.1in}

\begin{Thm}[Classification of non-degenerate conjugate conics according to the containing planes] \label{equivdefs} Let $S,S^\perp$ be two non-degenerate conjugate conics. Then they correspond to either one of the following: 
\begin{itemize}
\item {\bf Case 1}. \emph{Line-Empty Pairs}: $S$ is a line and $S^\perp$ is at infinity (or vice versa).
$$
\begin{cases}
$S$ \text{ is a line.}\\
S^\perp = \emptyset
\end{cases}
$$
\item {\bf Case 2}. $S$ and $S^\perp$ are contained in complementary affine planes $\pi$ and $\pi^\perp$, respectively.
\begin{enumerate}[label=(\roman*)]
\item \emph{Circles} in definite planes:
$$
\begin{cases}
S =\pi \cap H_{\underline p,  \pm r^2}\\
S^\perp =\pi^\perp \cap H_{\underline p, \mp r^2},
\end{cases}
$$
where $\pi,\pi^\perp$ are definite affine planes in $\mathbb R^{2,2}$ intersecting at $\underline p\in\mathbb R^{2,2}$ and $r^2\neq 0$. 
\item \emph{Hyperbolae} in indefinite planes:
$$
\begin{cases}
S =\pi \cap H_{\underline p,  \pm r^2}\\
S^\perp =\pi^\perp \cap H_{\underline p, \mp r^2},
\end{cases}
$$
where $\pi,\pi^\perp$ are indefinite affine planes in $\mathbb R^{2,2}$ intersecting at $\underline p\in\mathbb R^{2,2}$ and $r^2\neq 0$.

\item \emph{Parabolae} in degenerate planes:
$$
\begin{cases}
S =\pi \cap C_{\underline {\tilde p}}\\
S^\perp =\pi^\perp \cap C_{\underline p},
\end{cases}
$$
where $\pi,\pi^\perp$ are non-intersecting degenerate affine planes in $\mathbb R^{2,2}$ and $\underline p\in\pi, \underline {\tilde p}\in\pi^\perp$ are null-separated points. 
\end{enumerate}
\end{itemize}
\end{Thm}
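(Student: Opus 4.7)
The plan is to reduce the classification to the action of a single inversion on a Case 2(i) pair and then to perform a case analysis. By the pseudo-Euclidean Liouville theorem, every conformal map of $\mathbb R^{2,2}$ factors as $g_1 \circ I \circ g_2$ (or as a pure similarity), where $g_1, g_2$ are similarities generated by translations, elements of $O(2,2)$, and dilations, and $I(\underline x) = \underline x / ||\underline x||^2$ is the standard inversion. Each of the four listed types is closed under similarities, since similarities preserve the induced signature of every affine $2$-plane, map hyperspheres $H_{\underline a, \pm r^2}$ to hyperspheres of the same form, and send lines to lines. Consequently, it suffices to determine the image under $I$ of an arbitrary Case 2(i) pair $(S, S^\perp) = (\pi \cap H_{\underline p, r^2}, \pi^\perp \cap H_{\underline p, -r^2})$.

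Using $g_2$ I would first normalise so that $\pi, \pi^\perp$ are translates of the two standard coordinate planes through a point $\underline p$ in one of a short list of canonical forms, determined by the position of $\underline p$ relative to the null cone at the origin (the singular locus of $I$). Provided the input conic does not meet this null cone, its image under $I$ is again a smooth planar conic, and so spans a unique affine $2$-plane $\tilde\pi$. Since $I$ is conformal with an explicit Jacobian, tangent vectors to $I(S)$ at $I(\underline x)$ are given by a specific linear formula in the tangent vectors of $S$ at $\underline x$; comparing the induced metric at two distinct points then pins down the signature of $\tilde\pi$. A direct calculation also shows that the image pair stays complementary and has the form $I(S) = \tilde\pi \cap H_{\tilde{\underline p}, \pm \tilde r^{\,2}}$, so only the signature of $\tilde\pi$ needs to be tracked.

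Case analysis on the position of $\underline p$ then produces all four possibilities. In generic position the signature of $\tilde\pi$ remains definite, keeping us in Case 2(i). Moving $\underline p$ across a configuration in which $\tilde\pi$ acquires an indefinite induced metric produces Case 2(ii) (conjugate hyperbolae). At the transitional configuration the induced metric degenerates, and the image conic, no longer a closed curve, is a parabola lying on the null cone $C_{\underline q}$ of a distinguished point, giving Case 2(iii). Finally, if the input circle $S$ meets the null cone at the origin, then $I$ sends it to a line while collapsing its conjugate $S^\perp$ to infinity, yielding the line-empty Case 1.

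The main obstacle will be Case 2(iii): verifying that the two limiting degenerate planes $\tilde\pi, \tilde\pi^\perp$ are non-intersecting and that the privileged points $\underline p \in \pi$ and $\underline{\tilde p} \in \pi^\perp$ on whose null cones the parabolae lie are null-separated, as required in the statement. This demands either a delicate limit argument from the hyperbolic Case 2(ii), or a direct explicit inversion landing in the parabolic configuration, and in either case a careful control of how the complementarity of the planes and the null-separation of $\underline p, \underline{\tilde p}$ emerge from the degeneration of the induced metrics.
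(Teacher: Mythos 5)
Your strategy is genuinely different from the paper's. The paper never decomposes the conformal group: it encodes a conic by its associated $3$-subspace $V\subset\mathbb R^{3,3}$ of diagonal polyspherical coordinates, characterizes non-degenerate conjugate conics as pairs of \emph{complementary indefinite} $3$-subspaces $V,V^\perp$ (Proposition \ref{ndccs2}), and then reads off the whole classification from the linear-algebraic position of $V\cap\mathcal P$ and $V^\perp\cap\mathcal P$ (Lemma \ref{technicallem}) together with the orthogonality dictionary of Proposition \ref{ortho}. Your Liouville-plus-inversion route is viable in principle, but as written it leaves the hard parts undone, and the places where it is vague are exactly the places where the computation is delicate.

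Concretely: (1) You assert that the image of a planar conic under $I$ is ``again a smooth planar conic'' spanning a plane $\tilde\pi$, and that ``a direct calculation shows'' the image pair has the form $\tilde\pi\cap H_{\tilde{\underline p},\pm\tilde r^2}$ with the planes complementary. Neither is automatic -- the image of a plane under $I$ is a $2$-sphere, not a plane, and the generic image of a conic under an inversion is a quartic; the correct justification is that $I$ preserves the class of intersections of hyperspheres, which is precisely the polyspherical formalism you are trying to avoid. Moreover the asserted normal form is \emph{false} in the parabolic case: there the two curves are cut out by isotropic cones centered at distinct null-separated points lying in the \emph{opposite} planes, not by hyperspheres about a common center. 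You flag Case 2(iii) as ``the main obstacle'' and leave it open, but it is the only case not already in John, so the proof cannot be considered complete without it. (2) Your trigger for Case 1 is wrong, or at least mis-stated: if the circle $S$ merely \emph{meets} the null cone $C_{\underline 0}$, the image is an unbounded conic (a hyperbola for transversal intersection, a parabola for tangency), not a line. The line--empty case occurs exactly when $S$ passes through the origin itself, equivalently when $S^\perp$ is entirely contained in $C_{\underline 0}$; this is what forces $I(S^\perp)=\emptyset$. (3) The ``case analysis on the position of $\underline p$'' is not the right invariant either -- what governs the outcome is how the \emph{curve} $S$ (and simultaneously $S^\perp$) meets the singular cone $C_{\underline 0}$, and you must check that the two curves always degenerate compatibly (both definite, both indefinite, both degenerate, or one a line and the other empty). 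That compatibility is exactly the signature identity $\sgn(V\cap\mathcal P)+\sgn(V^\perp\cap\mathcal P)=0$ proved in Lemma \ref{technicallem}, and your sketch contains no substitute for it.
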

\vspace{0.1in}

The proof relies on the characterization of non-degenerate conjugate conics as intersections of hyperspheres in $\mathbb R^{2,2}$. Through the use of polyspherical coordinates of hyperspheres it is possible to identify a pair of non-degenerate conjugate conics $S,S^\perp$ with a pair of complementary indefinite linear $3$-dimensional subspaces $V,V^\perp$ in a six-dimensional space of neutral signature $(3,3)$, denoted $\mathbb R^{3,3}$. 

The planes $\pi,\pi^\perp\subset \mathbb R^{2,2}$ containing the conics are then identified with the intersections of $V\cap \mathcal P$ and  $V^\perp\cap \mathcal P$ respectively, where $\mathcal P$ is a special degenerate $5$-subspace of $\mathbb R^{3,3}$ containing the polyspherical coordinates of hyperplanes in $\mathbb R^{2,2}$. The different possibilities for the signature of the metric induced on the spaces $V\cap \mathcal P, V^\perp\cap \mathcal P$ make up the different cases for the non-degenerate conjugate conics.

Altogether, one obtains the conformal version of Asgeirsson's mean value theorem for solutions of the ultrahyperbolic equation.

\vspace{0.1in}
\begin{Thm}\label{t:1} 
Let $u$ be a solution of the ultra-hyperbolic equation. Let $S,S^\perp$ be a pair of non-degenerate conjugate conics in ${\mathbb R}^{2,2}$. Whenever the integrals are defined, we have
\[
\int_S u \ dl = \int_{S^\perp} u \ dl,
\]
where $dl$ is the line element induced on the curves $S,S^\perp$ by the flat metric $g$.
\end{Thm}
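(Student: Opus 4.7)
The plan is to exploit the conformal invariance of the ultrahyperbolic equation on $\mathbb{R}^{2,2}$ to reduce the statement to the classical Asgeirsson identity (\ref{asgeirsson}), which is exactly the conclusion of the theorem for the standard conjugate circles $S_0, S_0^\perp$. By the definition of non-degenerate conjugate conics, there is a conformal diffeomorphism $\phi$ of $\mathbb{R}^{2,2}$ with $\phi^* g = \Omega^2 g$ for some positive factor $\Omega$ such that $\phi(S_0) = S$ and $\phi(S_0^\perp) = S^\perp$. Concretely, I will transport $u$ back through $\phi$, rescale it by the appropriate power of $\Omega$ so that the result is again a solution of the ultrahyperbolic equation, apply Asgeirsson to that rescaled solution, and then track how the flat arc length transforms.

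For the conformal transformation law I use that $g$ is flat, so its scalar curvature vanishes and the conformal Laplacian coincides with the flat one. Standard conformal covariance then gives, in dimension $n = 4$, a weight of $(n-2)/2 = 1$: if $u$ solves the ultrahyperbolic equation then so does
\[
v(x) := \Omega(x)\, u(\phi(x)).
\]
Asgeirsson's identity (\ref{asgeirsson}) applied to $v$ at the origin with $r = 1$ then reads $\int_{S_0} v \, dl_0 = \int_{S_0^\perp} v \, dl_0$, where $dl_0 = d\theta$ is the flat arc length on the two unit circles. Because $\phi$ is conformal with factor $\Omega$, arc length transforms as $\phi^* dl = \Omega \, dl_0$ along any curve, so the change of variables $y = \phi(x)$ produces $\int_{S_0} v \, dl_0 = \int_S u \, dl$ and likewise $\int_{S_0^\perp} v \, dl_0 = \int_{S^\perp} u \, dl$. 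Combining these identities settles Case 2 of Theorem \ref{equivdefs} uniformly, handling conjugate circles, hyperbolae and parabolae in a single stroke.

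The main subtlety lies in Case 1 (line-empty pairs), where $\phi$ is not a genuine diffeomorphism of $\mathbb{R}^{2,2}$ but only a conformal map of its compactification: it sends $S_0^\perp$ out to infinity so that $S^\perp$ disappears entirely from $\mathbb{R}^{2,2}$, while $S_0$ opens up to an unbounded affine line $S$. The clean pullback argument breaks down on the singular locus of $\phi$, and a separate argument is required to establish that $\int_S u \, dl = 0$ whenever the improper integral converges. The plan there is to approximate the degenerate pair by a one-parameter family of Case 2 pairs and pass to the limit, using that the arc-length density on the $S^\perp$-side tends to zero as its image recedes to infinity; equivalently, one can realise both integrals as limits within the polyspherical framework in $\mathbb{R}^{3,3}$ developed for Theorem \ref{equivdefs}, where this limiting behaviour is built in.
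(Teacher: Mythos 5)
Your treatment of Case 2 is correct and is essentially the paper's own argument: the paper proves the theorem by invoking Proposition \ref{conformalasgeirsson} (imported from the authors' earlier paper), and what you have written out --- the Kelvin-type weight $\Omega^{(n-2)/2}=\Omega$ in dimension $4$ cancelling exactly against the arc-length Jacobian $\Omega$ of the conformal map, so that $\int_{S_0}v\,dl_0=\int_S u\,dl$ --- is precisely the mechanism behind that proposition. For conjugate circles, hyperbolae and parabolae your proof therefore closes, modulo one point you should acknowledge even in Case 2(iii): the conformal map producing a parabola sends one point of $S_0$ to infinity, so the pullback identity is between improper integrals and Asgeirsson is being applied to a $v$ with an isolated singularity; this is exactly the load carried by the phrase ``whenever the integrals are defined.''

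The genuine gap is your Case 1. What you offer there is a plan rather than a proof --- neither the approximating family of Case 2 pairs nor the interchange of limit and integral is constructed --- and, more seriously, the identity that plan is aimed at, namely $\int_S u\,dl=0$ for $S$ a line in a line-empty pair, fails for explicit solutions, so no approximation argument can deliver it. Take $u(\underline x)=f(x_1-x_3)$ with $f$ a nonnegative, not identically zero $C^2$ bump function; then $\Delta u=f''-f''=0$. Let $S$ be the $x_1$-axis, i.e.\ the conic $\{x_2=0\}\cap\{x_3=0\}\cap\{x_4=0\}$, whose associated subspace $V=\spn\{(0,0,1,0,0,0),(0,0,0,1,0,0),(0,0,0,0,1,0)\}$ is indefinite of signature $(+,-,-)$ and lies in $\mathcal P$, while $V^\perp=\spn\{(1,0,0,0,0,0),(0,1,0,0,0,0),(0,0,0,0,0,1)\}$ is indefinite and contains $\mathcal P^\perp$; by Proposition \ref{ndccs2} and Case 1 of Theorem \ref{equivdefs} this is exactly a line-empty pair. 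Yet $\int_S u\,dl=\int_{\mathbb R}f(t)\,dt>0$ is absolutely convergent and nonzero. The moral is that in the line-empty case the ``integral over $S^\perp$ at infinity'' cannot be read as $0$: either it must be reinterpreted as a nontrivial boundary term recording asymptotic data of $u$, or the case must be excluded from the mean value statement --- which is what the paper effectively does by establishing the identity only for honest conformal diffeomorphisms of $\mathbb R^{2,2}$ via Proposition \ref{conformalasgeirsson}. Your instinct that Case 1 requires a separate argument is right; the resolution you sketch is not available.
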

\vspace{0.1in}

In \cite{fjohn} the first link was made between the ultrahyperbolic equation and straight lines in space. In particular, the variables $\underline x = (x_1, x_2, x_3, x_4)$ are interpreted as local coordinates in the manifold of straight lines through Pl\"ucker coordinates \cite[pg. 303]{fjohn}, and solutions of equation (\ref{UHE}) are shown to be line integrals of functions on $\mathbb R^3$. In other words, the ultrahyperbolic equation is the compatibility condition that a line function arising as a line integral of a scalar function in $\mathbb R^3$ must satisfy.

Asgeirsson's relation (\ref{asgeirsson}) has a special interpretation when $(x_1,x_2,x_3,x_4)$ are seen as line coordinates in the way proposed by John. The curves $S_0,S_0^\perp$ over which this mean value property holds correspond to the two families of generating lines of the one-sheeted hyperboloid of revolution 
$H_0\subset{\mathbb R}^3$ given by $x^2 + y^2 = \frac{1}{4}(1+z^2)$. Similarly, non-degenerate conjugate conics are shown to correspond to the pairs of families of lines of non-planar doubly ruled surfaces in $3$-space.

\vspace{0.1in}
\begin{Thm} \label{rulsurf}
Let $S,S^\perp$ be two curves in $\mathbb R^{2,2}$ representing the two one-parameter families of lines $L,L^\perp$ in $3$-space. Then $S,S^\perp$ are a pair of non-degenerate conjugate conics in $\mathbb R^{2,2}$ if and only if $L$ and $L^\perp$ are the two families of generating lines of a non-planar doubly ruled surface in $3$-space.
\end{Thm}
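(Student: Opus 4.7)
\smallskip\noindent\textbf{Proof proposal.}
The strategy is to bootstrap from the base case established by Fritz John, using the correspondence between conformal maps of $\mathbb R^{2,2}$ and projective transformations of $\mathbb R^3$ developed in \cite{us}, together with the classical projective classification of doubly ruled surfaces. First I would recall John's explicit calculation: in the Pl\"ucker-type line coordinates $(x_1,x_2,x_3,x_4)$, the standard conjugate circles $S_0,S_0^\perp$ parameterize precisely the two rulings of the standard hyperboloid $H_0\colon x^2+y^2=\tfrac14(1+z^2)$, which anchors both directions.

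For the forward direction, write $S=\Phi(S_0)$, $S^\perp=\Phi(S_0^\perp)$ for a conformal map $\Phi$ of $\mathbb R^{2,2}$ and let $\phi$ be the associated projective map of $\mathbb R^3$. Since $\phi$ carries rulings of $H_0$ to rulings of $\phi(H_0)$, and $\phi(H_0)$ remains a non-planar doubly ruled surface (projective transformations preserve this class), the families $L,L^\perp$ represented by $S,S^\perp$ are exactly the two rulings of $\phi(H_0)$. For the converse, take a non-planar doubly ruled surface $\Sigma\subset\mathbb R^3$. The classical classification shows that, projectively, $H_0$ is the only such surface up to equivalence (one-sheeted hyperboloids and hyperbolic paraboloids being the two affine faces of a single real projective quadric of signature $(2,2)$), so there is a projective $\phi$ with $\phi(H_0)=\Sigma$. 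Lifting $\phi$ to the associated conformal $\Phi$ yields $S=\Phi(S_0)$, $S^\perp=\Phi(S_0^\perp)$, which is by definition a pair of non-degenerate conjugate conics, and the two curves trace out the rulings of $\Sigma$ as required.

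The main obstacle will be the careful treatment of degenerate and limiting configurations. Hyperbolic paraboloids carry rulings that become lines at infinity in affine $\mathbb R^3$, and the conjugate parabolae and line-empty pairs from Theorem \ref{equivdefs} similarly sit at the boundary of the generic picture; matching them with the projective types of doubly ruled surfaces requires passing to the projective completion and verifying that the line-space/projective-space correspondence extends consistently across the stratification. In particular, one must check that the case split according to the signature of the containing planes (definite, indefinite, degenerate) lines up with the possible positions of a smooth quadric relative to the plane at infinity in $\mathbb R^3$: definite planes with the hyperboloid of revolution, indefinite planes with general one-sheeted hyperboloids, degenerate planes with hyperbolic paraboloids, and the line-empty case with a pencil-type degeneration. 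Verifying this matching cleanly, rather than case-by-case computation, is where the bulk of the work will lie.
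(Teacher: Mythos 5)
Your strategy is genuinely different from the paper's, and it contains one real gap. The paper never transports the problem to a group action on quadrics. Instead it characterizes both sides of the equivalence by an intrinsic incidence property and matches the two characterizations verbatim: Proposition \ref{3pointscondition} shows that $S,S^\perp$ are non-degenerate conjugate conics if and only if each curve is the intersection of the three isotropic cones centred at any three mutually non-null-separated points of the other (this rests on Lemma \ref{threecones}, Lemma \ref{distvsprod} and the associated-subspace formalism in $\mathbb R^{3,3}$), while Proposition \ref{3linescondition} records the classical fact \cite{hcv} that $L,L^\perp$ are the two reguli of a non-planar doubly ruled surface if and only if each family is the locus of transversals to any three skew lines of the other. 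Since null-separation in the coordinates (\ref{johnscoords}) is precisely incidence of the corresponding lines, the two conditions are identical and the theorem follows with no appeal to transitivity of any group and no case analysis at infinity. Your route, by contrast, would give more explicit information (an actual conformal map realizing any given doubly ruled surface), but it front-loads all the difficulty into the correspondence between groups.

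The gap is the sentence ``let $\phi$ be the associated projective map of $\mathbb R^3$.'' The conformal group of (the compactification of) $\mathbb R^{2,2}$ is $PO(3,3)$, a $15$-dimensional group with more than one component; the collineation group $PGL(4,\mathbb R)$ embeds in it via the Klein correspondence as a \emph{proper} subgroup, the remaining elements being induced by correlations (point--plane dualities) of $\mathbb RP^3$, which are not point transformations of $3$-space at all. So for a general conformal $\Phi$ the map $\phi$ you invoke need not exist, and the forward direction as written does not go through. It can be repaired --- correlations send lines to lines preserving incidence, hence carry reguli to reguli --- but that repair is essentially a retreat to the incidence argument the paper uses, and in any case the full correspondence (every conformal map of $\mathbb R^{2,2}$ is induced by a collineation or correlation) is a substantive fact that neither your sketch nor Proposition \ref{O33} establishes; the paper only produces a $\Lambda\in O^\pm(3,3)$, never a map of $\mathbb R^3$. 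The converse half of your argument (projective transitivity on smooth ruled quadrics plus the lift from \cite{us}) is sound. A final minor point: the matching you propose at the end between signature of the containing planes and affine type of the quadric is not correct as stated --- indefinite planes can also arise from hyperbolic paraboloids carrying horizontal rulings --- but since Theorem \ref{rulsurf} is a projective statement, that classification is not actually needed for the proof.
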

\vspace{0.1in}

In Section 2, it is shown that a solution of the ultrahyperbolic equation has the mean value property over any pair of curves $S,S^\perp$ that arise as the image of $S_0,S_0^\perp$ under an arbitrary conformal mapping of $\mathbb R^{2,2}$. This proves Theorem \ref{t:1}.

Section 3 contains a formal treatment of conjugate conics as objects in the conformal geometry of the pseudo-Euclidean space $\mathbb R^{2,2}$. A complete list of all non-degenerate conjugate conics is given, proving Theorem \ref{equivdefs}.

In Section 4, the one-to-one correspondence between pairs of non-degenerate conjugate conics and pairs of families of generating lines of doubly ruled surfaces in $3$-space is proven, as per Theorem \ref{rulsurf}. 
\vspace{0.1in}

\section{Conformal extension of Asgeirsson's mean value theorem}

Define $S_0,S_0^{\perp}\subset{\mathbb R}^{2,2}$ to be the pair of circles 
$$
S_0:=\{(\alpha_1,\alpha_2,0, 0) \in {\mathbb R}^{2,2} \ | \ \alpha_1^2 +\alpha_2^2 = 1\},
$$
and
$$
S_0^\perp:=\{(0,0,\beta_1,\beta_2) \in{\mathbb R}^{2,2} \ |\ \beta_1^2 +\beta_2^2 = 1\}.
$$
In this dimension the original mean value theorem of Asgeirsson can be stated:
\vspace{0.1in}

\begin{Thm}[Asgeirsson's Theorem \cite{LA}] \label{Asgeirsson}
Let $u:{\mathbb R}^{2,2}\rightarrow \mathbb R$ be a solution of $\Delta u=0$. Then the following integral equation holds
$$
\int_{S_0} u \ dl = \int_{S_0^\perp} u \ dl,
$$
where $dl$ represents the line element induced by the flat metric $g=dx_1^2+dx_2^2-dx_3^2-dx_4^2$.
\end{Thm}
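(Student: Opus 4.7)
The plan is to introduce the double spherical mean of $u$, derive an Euler--Poisson--Darboux equation from \eqref{UHE}, and exploit its symmetry to equate the two circle integrals.

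Set
\[
\Psi(r,s) := \frac{1}{(2\pi)^2}\int_0^{2\pi}\!\int_0^{2\pi} u(r\cos\theta, r\sin\theta, s\cos\phi, s\sin\phi)\,d\theta\,d\phi.
\]
Parameterizing the circles by arclength one checks $\int_{S_0} u\,dl = 2\pi\,\Psi(1,0)$ and $\int_{S_0^\perp} u\,dl = 2\pi\,\Psi(0,1)$, so the theorem reduces to $\Psi(1,0)=\Psi(0,1)$; in fact the argument will give $\Psi(r,0)=\Psi(0,r)$ for every $r\geq 0$.

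For each fixed $(s,\phi)$, the 2D Laplacian $u_{11}+u_{22}$ in polar coordinates of the first pair equals $u_{rr}+\tfrac{1}{r}u_r+\tfrac{1}{r^2}u_{\theta\theta}$, and analogously for the second pair. Integrating over the torus, the angular second derivatives drop out by periodicity, and the pointwise identity $u_{11}+u_{22}=u_{33}+u_{44}$ from \eqref{UHE} becomes the Euler--Poisson--Darboux equation
\[
\Psi_{rr}+\tfrac{1}{r}\Psi_r \;=\; \Psi_{ss}+\tfrac{1}{s}\Psi_s \qquad (r,s>0),
\]
with $\Psi$ smooth and even in each variable by rotational invariance of the mean. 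Passing to $X=r^2$, $Y=s^2$ and $\chi(X,Y):=\Psi(\sqrt X,\sqrt Y)$ symmetrizes the equation to $(X\chi_X)_X=(Y\chi_Y)_Y$. Substituting the formal expansion $\chi=\sum_{n,m\geq 0} a_{nm}X^n Y^m$ and matching coefficients yields the recursion $(p+1)^2 a_{p+1,q}=(q+1)^2 a_{p,q+1}$. Iterating along the antidiagonal $n+m=k$ produces $a_{k-j,j}=\binom{k}{j}^2 a_{k,0}$; at $j=k$ this gives $a_{0,k}=a_{k,0}$. Hence the Taylor expansions of $\chi(\cdot,0)$ and $\chi(0,\cdot)$ coincide, which yields $\Psi(r,0)=\Psi(0,r)$ for analytic $u$.

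The main obstacle is promoting the identity from analytic to arbitrary smooth solutions, since the coefficient argument does not by itself exclude flat nonzero differences. One approach is to regularize $u$ by convolution with an analytic kernel (which preserves \eqref{UHE}) and pass to the limit. An alternative, closer to the geometric thread developed later in the paper, is to interpret $u$ as the X-ray transform of a function $f$ on $\mathbb{R}^3$ and identify both line integrals with the surface integral of $f$ over the one-sheeted hyperboloid whose two rulings are parameterized by $S_0$ and $S_0^\perp$, as in \cite{fjohn}; density of such $u$ then extends the identity to all smooth solutions.
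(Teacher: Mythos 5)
The paper offers no proof of this statement---it is quoted directly from Asgeirsson's 1937 paper \cite{LA}---so there is no internal argument to compare against; your attempt must stand on its own. Its first two thirds do: the reduction of the two circle integrals to $2\pi\Psi(1,0)$ and $2\pi\Psi(0,1)$ is correct (the induced line element on each circle is the standard angular one), the Darboux identity for circular means turns the pointwise equation $u_{11}+u_{22}=u_{33}+u_{44}$ into $\Psi_{rr}+\tfrac1r\Psi_r=\Psi_{ss}+\tfrac1s\Psi_s$ exactly as you say, and the substitution $X=r^2$, $Y=s^2$ with the recursion $(p+1)^2a_{p+1,q}=(q+1)^2a_{p,q+1}$ does give $a_{0,k}=a_{k,0}$ and hence $\Psi(r,0)=\Psi(0,r)$ for real-analytic $u$. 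This is in fact the classical starting point for the theorem.

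The genuine gap is the step you yourself flag, and neither of your two completions closes it as written. The ultrahyperbolic operator is not hypoelliptic (e.g.\ $u=f(x_1-x_3)$ solves the equation for any $C^2$ function $f$), so solutions are genuinely non-analytic and the restriction to analytic $u$ loses real content. Your convolution route needs an analytic kernel, which necessarily has unbounded support (there are no nonzero compactly supported real-analytic functions), so the convolution integral requires a growth bound on $u$ that the theorem does not assume; a compactly supported mollifier does preserve the equation but yields only a $C^\infty$ solution, not an analytic one, so it does not feed back into your power-series argument. Your X-ray route requires John's representation theorem together with a density statement in a topology strong enough to pass both circle integrals to the limit, neither of which is established. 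The standard repair---essentially Asgeirsson's own---keeps your Euler--Poisson--Darboux equation but replaces the power series by a uniqueness argument: set $w(r,s)=\Psi(r,s)-\Psi(s,r)$, observe that $w$ satisfies the same equation with $w(r,r)=0$ and $w_s(r,0)=w_r(0,s)=0$ (by evenness of $\Psi$ in each slot), and show $w\equiv0$ on the triangle $0\le s\le r\le1$ by an energy integral over characteristic triangles; this uses only $u\in C^2$. Alternatively, your density idea can be made rigorous by invoking Malgrange's approximation theorem: on a convex neighbourhood of the closed bidisk, smooth solutions of a constant-coefficient equation are $C^\infty_{\mathrm{loc}}$-limits of finite sums of exponential solutions $e^{\langle\underline\zeta,\underline x\rangle}$ with $\zeta_1^2+\zeta_2^2-\zeta_3^2-\zeta_4^2=0$, which are entire; combined with a preliminary mollification this reaches all $C^2$ solutions. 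Until one of these is carried out, the proof covers only analytic solutions, which is strictly weaker than the stated theorem.
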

\vspace{0.1in}

Asgeirsson also proves a similar result over pairs of circles whose center is not necessarily the origin and of arbitrary radius. John mentions in \cite{fjohn} that an Asgeirsson's theorem in its most general form can be obtained by applying isometries of the neutral metric to those circles. However, in the present paper a more general way of extending Asgeirsson's theorem is considered through the use of conformal isometries of the metric.

The flat Laplace operator $\Delta$ is the ultrahyperbolic operator which acts on twice continuously differentiable functions $u:{\mathbb R}^{4}\rightarrow \mathbb R$ as
$$
\Delta u :=
\frac{\partial^2 u}{\partial x_1^2} + \frac{\partial^2 u}{\partial x_2^2} -
\frac{\partial^2 u}{\partial x_3^2} -
\frac{\partial^2 u}{\partial x_4^2}.
$$

The ultrahyperbolic equation (\ref{UHE}) can be seen as the Laplace equation $\Delta u =0$ in the pseudo-Riemmanian manifold $\mathbb R^{2,2} \equiv (\mathbb R^4, g)$. It is well-known that solutions of the Laplace equation show conformal invariance, in the sense that if $f:\mathbb R^{2,2}\rightarrow\mathbb R^{2,2}$ is a conformal diffeomorphism 
$$
g(df(v), df(w)) =\lambda g(v,w) \qquad v,w\in T\mathbb R^{2,2},
$$
with conformal factor $\lambda:\mathbb R^{2,2}\rightarrow\mathbb R$, then $\lambda (u\circ f)$ solves the Laplace equation if and only if $u$ does \cite[Proposition 2.4]{us}.

Conformal invariance of the ultrahyperbolic equation (\ref{UHE}) can be exploited to extend the mean value relation stated originally by Asgeirsson:
\vspace{0.1in}

\begin{Prop}\cite[Proposition 2.12]{us} \label{conformalasgeirsson}
Let $u:{\mathbb R}^{2,2}\rightarrow \mathbb R$ be a solution of $\Delta u=0$ and $f:{\mathbb R}^{2,2} \rightarrow {\mathbb R}^{2,2}$ be a conformal map. Then

$$
\int_{f(S_0)} u \ dl = \int_{f(S_0^\perp)} u \ dl,
$$
where $dl$ represents the line element induced by the flat metric $g$.
\end{Prop}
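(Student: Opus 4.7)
The strategy is to transport Asgeirsson's theorem from the standard pair $S_0,S_0^\perp$ to the image pair $f(S_0),f(S_0^\perp)$, by using the conformal invariance of $\Delta$ to produce an auxiliary solution on the source of $f$, and then a change of variables to convert the Asgeirsson identity on $S_0,S_0^\perp$ into one on $f(S_0),f(S_0^\perp)$.

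First, apply the conformal invariance of the ultrahyperbolic equation (Proposition 2.4 of \cite{us}) to produce from $u$ a new solution $\tilde u$ of $\Delta\tilde u=0$ on the source of $f$, of the form $\tilde u=\lambda^{\alpha}(u\circ f)$ with $\alpha$ the conformal weight of a harmonic function in the critical dimension four. Theorem~\ref{Asgeirsson} applied to $\tilde u$ gives
$$
\int_{S_0}\tilde u\,dl = \int_{S_0^\perp}\tilde u\,dl.
$$
Second, change variables under $f$: since $f^{*}g=\lambda g$, tangent norms rescale by $\lambda^{1/2}$, so the induced line element transforms as $dl_{f(S_0)}=\lambda^{1/2}\,dl_{S_0}$ under the parametrization $\xi\mapsto f(\xi)$, and analogously for $S_0^\perp$. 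Hence
$$
\int_{f(S_0)} u\,dl \;=\; \int_{S_0}\lambda^{1/2}(u\circ f)\,dl,\qquad \int_{f(S_0^\perp)} u\,dl \;=\; \int_{S_0^\perp}\lambda^{1/2}(u\circ f)\,dl.
$$
Provided the conformal weight $\alpha$ matches the Jacobian exponent $1/2$ coming from the one-dimensional change of variables, both right-hand sides reduce to $\int\tilde u\,dl$ over $S_0$ and $S_0^\perp$, and chaining the three identities gives the claim.

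The delicate point is precisely this matching of exponents. The conformal weight for a harmonic function in $n$ dimensions (in the convention $f^{*}g=\lambda g$) is $(n-2)/4$, whereas the Jacobian of $f$ restricted to a one-dimensional submanifold always has exponent $1/2$; the two coincide exactly when $n=4$, which is the dimensional coincidence that allows the extension of Asgeirsson's theorem to survive conformal transport with no residual weight factor. A secondary issue is that in neutral signature $\lambda$ need not be positive, so $\lambda^{1/2}$ must be interpreted via $|\lambda|^{1/2}$ (or one works in the open set on which $\lambda$ has a definite sign along both curves); for conformal diffeomorphisms under which $S_0$ and $S_0^\perp$ remain regular curves this poses no obstacle, which explains the hypothesis that the integrals be defined in the statement.
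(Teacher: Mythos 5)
Your argument is correct and is the standard one: transport $u$ to the auxiliary solution $\tilde u=\lambda^{1/2}(u\circ f)$, apply Theorem \ref{Asgeirsson} to $\tilde u$ over $S_0,S_0^\perp$, and observe that the weight $\lambda^{(n-2)/4}=\lambda^{1/2}$ is exactly the factor by which the induced line element rescales, so the two cancel in the change of variables. The paper does not reprove this statement but cites \cite[Proposition 2.12]{us}, and your proof is essentially that argument; your identification of the exponent matching in dimension four as the crux is exactly right. One small caveat: Section 2 of the present paper quotes the conformal invariance as ``$\lambda(u\circ f)$ solves the Laplace equation'' while displaying the convention $g(df(v),df(w))=\lambda g(v,w)$; with that convention the correct weight is $\lambda^{1/2}$ as you have it (the discrepancy is a normalization of $\lambda$ inherited from \cite{us}), and your version is the one for which the cancellation against the line-element Jacobian actually closes the proof.
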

\vspace{0.1in}

The purpose of the present paper is to the determine the most general image of the circles $S_0,S_0^\perp$ under a conformal map of $\mathbb R^{2,2}$. Any pair of curves $S,S^\perp$ arising in this manner is called a pair of \emph{non-degenerate conjugate conics}, and according to Proposition \ref{conformalasgeirsson}, a mean value relation for solutions of equation (\ref{UHE}) will hold over it. Once these have been classified as in Theorem \ref{equivdefs}, Theorem \ref{t:1} follows.

\section{Pseudo-conformal geometry of $\mathbb R^{2,2}$}
Consider the affine space $\mathbb R^{2,2}$ endowed with the bilinear form
$$
\langle \underline x, \underline x'\rangle := x_1x_1'+x_2x_2'-x_3x_3'-x_4x_4',
$$
for all $\underline x,\underline x'\in\mathbb R^{2,2}$ and associated quadratic form of signature $(2,2)$ 
$$||\underline x||^2 := \langle \underline x, \underline x\rangle =  x_1^2+x_2^2-x_3^2-x_4^2,$$
for all $\underline x\in\mathbb R^{2,2}$.  

It is well-known that the conformal geometry of the pseudo-Euclidean space $\mathbb R^{p,q}$ is better understood through its embedding into the null-cone in $\mathbb R^{p+1,q+1}$ \cite{cft}. As we will be working with the spaces $\mathbb R^{2,2}$ and $\mathbb R^{3,3}$ at the same time, we will use different notations to distinguish between the two spaces. Vectors in $\mathbb R^{2,2}$ will be denoted using underline notation, and boldface notation will be used for vectors in $\mathbb R^{3,3}$. Moreover, in a slight abuse of notation, a vector $\mathbf s\in\mathbb R^{3,3}$ will often be represented in the way $\mathbf s = (s_0, \underline s, s_5)$, where $\underline s\in\mathbb R^{2,2}$ and $s_0,s_5\in\mathbb R$.
\vspace{0.1in}

\subsection{Diagonal polyspherical coordinates of hyperspheres}

Hyperspheres are the fundamental objects of conformal geometry, in the sense that they are the most general subsets preserved under conformal maps. A  hypersphere can either be: 
\begin{enumerate}[label=(\roman*).]
\item a \emph{proper hypersphere} given by an equation of the form $||\underline x - \underline a ||^2 = \pm r^2$
\item a \emph{hyperplane} defined by $\langle \underline a, \underline x\rangle =b$
\end{enumerate}
These objects can be defined in a unified way by an equation of the form
\begin{equation}\label{firsthyp}
\mathsf s_0||\underline{x}||^2 + 2\langle \underline{\mathsf s}, \underline{x}\rangle  + 2 \mathsf s_5= 0.
\end{equation}
The coefficients $\underline{\mathsf s}=(\mathsf s_1, \mathsf s_2,\mathsf s_3,\mathsf s_4)\in\mathbb R^6$ are called the {\it polyspherical coordinates} of the hypersphere (\ref{firsthyp}). Polyspherical coordinates uniquely determine a hypersphere, but they are defined up to scaling, as multiplying equation (\ref{firsthyp}) by a non-zero factor leaves the solution set invariant. Further information on polyspherical coordinates of hyperspheres can be found in \cite{russian}.

Knowing the polyspherical coordinates of a hypersphere, one can find its center and radius-square as follows: 
\[
\underline a = - \frac{\underline{\mathsf s}}{\mathsf s_0}, \qquad\qquad
\pm r^2  = \frac{||\underline{\mathsf s}||^2 - 2\mathsf s_0 \mathsf s_5}{\mathsf s_0^2}.
\]

In this paper, a linear transformation of the polyspherical coordinates will be considered so that the expression on top of the radius-square is in diagonal form, i.e. \emph{sums of squares}. 
\vspace{0.1in}
\begin{Def} Let $H$ be the hypersphere in $\mathbb R^{2,2}$ given by the equation 
\begin{equation}\label{hyperspheredef}
(s_0+s_5) ||\underline{x}||^2 - 2\langle \underline{s}, \underline{x}\rangle  + (-s_0+s_5)= 0.
\end{equation}
The coefficients $\mathbf s:=(s_0, \underline{s}, s_5) = (s_0, s_1,s_2,s_3,s_4, s_5)\in\mathbb R^6$ are called the \emph{diagonal polyspherical coordinates (DPC)}  of the hypersphere $H$. Sometimes the notation $\mathbf s_H$ will be used to emphasize the relation between the coordinates and the hypersphere.
\end{Def}
\vspace{0.1in}

As noted before, the solution set of equation (\ref{hyperspheredef}) is invariant under rescaling of the numbers $(s_0,\underline s, s_5$, thus the diagonal polyspherical coordinates of a hypersphere in $\mathbb R^{2,2}$ are regarded as points in the projective space $\mathbb RP^5$. It is useful to work with the \emph{projective normalization condition} 
\begin{equation} \label{projcond}s_0+s_5=1, 
\end{equation}
as it will be shown now. If $\mathbf s:=(s_0, \underline{s}, s_5)$ are the normalized DPC of a hypersphere in $\mathbb R^{2,2}$ (so that $s_0+s_5=1$), the center and radius-square have simple expressions in terms of the DPC:
\[
\underline a =  \underline s, \qquad\qquad
\pm r^2  = s_0^2+||\underline s||^2 - s_5^2. 
\]

Consider the neutral bilinear form in $\mathbb R^6$ given by 
\begin{equation}\label{threethree}
(\mathbf s, \mathbf s'):=s_0s_0' + s_1s_1'+s_2s_2'-s_3s_3'-s_4s_4'- s_5s_5'.
\end{equation}
The associated quadratic form of neutral signature $(3,3)$
\[
(\mathbf s,\mathbf s) =s_0^2 + ||\underline{s}||^2 - s_5^2,
\]
is the expression of the radius-square of a hypersphere. Thus, the six-dimensional space of polyspherical coordinates of hyperspheres in $\mathbb R^{2,2}$ is naturally endowed with a bilinear structure of neutral signature $(3,3)$. Call $\mathbb R^{3,3}$ the linear space $\mathbb R^6$ endowed with this neutral bilinear form.  
\vspace{0.1in}

\begin{Ex} \label{standardhyp}
The \emph{proper hypersphere} centered at $\underline p=(p_1,p_2,p_3,p_4)$ and radius-square $\pm r^2$ of equation 
$$
H_{\underline p, \pm r^2}: ||\underline x- \underline p||^2 = \pm r^2,
$$
has polyspherical coordinates
$$
(s_0, \underline{s},s_5) = \left(
\frac{1-||\underline p||^2\pm r^2}{2},p_1,p_2,p_3,p_4, \frac{1+||\underline p||^2\mp r^2}{2} \right).
$$

The \emph{standard positive hypersphere} in $\mathbb R^{2,2}$ is given by $||\underline{x}||^2= 1$,
and has diagonal polyspherical coordinates
$$
(s_0, \underline{s},s_5)=(1,0,0,0,0,0).
$$
The \emph{standard negative hypersphere} given by $||\underline{x}||^2 = -1$ has DPC
$$
(s_0, \underline{s},s_5)=(0,0,0,0,0,1).
$$
\end{Ex}
\vspace{0.1in}

\begin{Ex}\label{hypplane} A hyperplane in $\mathbb R^{2,2}$
$$
\langle \underline a, \underline x \rangle := a_1x_1 + a_2x_2-a_3x_3-a_4x_4 = b,
$$
has diagonal polyspherical coordinates
$$
(s_0, \underline{s},s_5)=(-b, a_1,a_2,a_3,a_4, b).
$$
\end{Ex}
\vspace{0.1in}
\begin{Def} 
The proper hypersphere  $C_{\underline p}$ centered at $\underline p\in\mathbb R^{2,2}$ 
$$
||\underline x - \underline p||^2 = 0,
$$
and of zero radius-square
is called the \emph{isotropic cone} centered at $\underline p$.
A hyperplane $H$ given by equation $\langle \underline a, \underline x \rangle = b$ is said to be a \emph{null hyperplane} whenever $||\underline a||^2 = 0$.
\end{Def}
\begin{Ex}
The isotropic cone $C_{\underline p}$ has diagonal polyspherical coordinates
\begin{equation}\label{eq:DPScone}
\mathbf s_{\underline p} = \left(
\frac{1-||\underline p||^2}{2},p_1,p_2,p_3,p_4, \frac{1+||\underline p||^2}{2} \right).
\end{equation}
The null hyperplane $\langle \underline a, \underline x\rangle = b$ has diagonal polyspherical coordinates
\[
\left(
-b,a_1,a_2,a_3,a_4,b
\right), \quad ||\underline a||^2 =0.
\]
\end{Ex}
\vspace{0.1in}

The diagonal polyspherical coordinates of null hyperplanes and isotropic cones both satisfy the quadratic condition
$s_0^2 + ||\underline s||^2 - s_5^2=0$. Moreover, any vector $\mathbf s = (s_0, \underline s, s_5)\in\mathbb R^{3,3}$ satisfying this nullity condition and for which $\underline s \neq \underline 0$, will correspond to the diagonal polyspherical coordinates of an isotropic cone or a null hyperplane.
\vspace{0.1in}

\begin{Def}\label{nullconeQ} Define $Q\subset \mathbb R^{3,3}$ to be the null-cone in $\mathbb R^{3,3}$, i.e. 
$$
Q := \{ \mathbf s  \in  \mathbb R^{3,3}\ | \ (\mathbf s, \mathbf s) = 0 \},
$$
and define the following subset 
\begin{align*}
Q_1 & = Q \cap \{s_0 + s_5 = 1\}. 
\end{align*}
\end{Def}
\vspace{0.1in}

\begin{Prop} The set of isotropic cones in $\mathbb R^{2,2}$ is in one-to-one correspondence with the points in $Q_1$. 
Moreover, the map 
$\phi: \mathbb R^{2,2} \rightarrow Q_1$ defined by $\phi(\underline x) = \mathbf s_{\underline x}$ as in equation (\ref{eq:DPScone}) is a bijection.
\end{Prop}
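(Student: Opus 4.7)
The plan is to separate the statement into two linked claims — that $\phi$ is a bijection onto $Q_1$, and that $Q_1$ is in bijective correspondence with the set of isotropic cones — and to handle them in that order, since the second follows cleanly from the first once null hyperplanes are shown to lie outside $Q_1$.

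First I would verify that $\phi(\underline x)$ lies in $Q_1$ for every $\underline x \in \mathbb R^{2,2}$. The two outer coordinates in formula (\ref{eq:DPScone}) manifestly satisfy $s_0 + s_5 = 1$, and since $s_5 - s_0 = ||\underline x||^2$, one gets $s_5^2 - s_0^2 = (s_0 + s_5)(s_5 - s_0) = ||\underline x||^2$, hence $(\mathbf s, \mathbf s) = s_0^2 + ||\underline s||^2 - s_5^2 = ||\underline x||^2 - ||\underline x||^2 = 0$. Injectivity of $\phi$ is then immediate from reading off the middle coordinates $(s_1,s_2,s_3,s_4) = \underline x$, so two preimages with the same image coincide. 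For surjectivity, given $\mathbf s \in Q_1$ I set $\underline x := \underline s$; the two defining conditions of $Q_1$ combine to yield $(s_0 + s_5)(s_5 - s_0) = ||\underline s||^2$ and hence $s_5 - s_0 = ||\underline s||^2$, which together with $s_0 + s_5 = 1$ forces $s_0 = (1 - ||\underline s||^2)/2$ and $s_5 = (1 + ||\underline s||^2)/2$, matching (\ref{eq:DPScone}) exactly, so that $\mathbf s = \phi(\underline s)$.

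For the identification of $Q_1$ with the set of isotropic cones, I would argue that a DPC vector $\mathbf s$ represents a proper hypersphere precisely when the coefficient $s_0 + s_5$ of $||\underline x||^2$ in (\ref{hyperspheredef}) is nonzero, in which case projective rescaling allows the normalization $s_0 + s_5 = 1$, under which the signed radius-square coincides with $(\mathbf s, \mathbf s)$. Vanishing radius-square therefore corresponds exactly to $\mathbf s \in Q$, so the proper hyperspheres with zero radius — that is, the isotropic cones — correspond to the points of $Q_1$, while null hyperplanes have DPC with $s_0 + s_5 = 0$ and so are excluded from $Q_1$. The concrete bijection between isotropic cones and $Q_1$ is then realized as $C_{\underline p} \mapsto \phi(\underline p)$ via the first part. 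I do not expect any genuine obstacle; the argument is essentially bookkeeping around the projective normalization $s_0 + s_5 = 1$, and the one delicate point is recovering $s_5 - s_0 = ||\underline s||^2$ from the two defining conditions of $Q_1$, since this algebraic identity is what actually inverts $\phi$.
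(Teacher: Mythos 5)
Your proposal is correct and follows the direct verification that the paper leaves implicit (no proof is printed for this Proposition): check $s_0+s_5=1$ and $(\mathbf s,\mathbf s)=0$ for $\mathbf s_{\underline x}$, read off $\underline x$ from the middle coordinates for injectivity, and invert via $s_5-s_0=\|\underline s\|^2$ together with $s_0+s_5=1$ for surjectivity, with the normalization $s_0+s_5=1$ excluding null hyperplanes. This matches the paper's surrounding discussion exactly, so there is nothing to add.
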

\vspace{0.1in}

\subsection{From DPC to the points that form the hypersphere}\label{DPCtopoints}
\vspace{0.1in}

\begin{Prop} \label{dpstopoints} Let $\underline x\in\mathbb R^{2,2}$ and $H\subset \mathbb R^{2,2}$ a hypersphere with diagonal polyspherical coordinates $\mathbf s_H\in \mathbb R^{3,3}$. Then $\underline x\in H$ if and only if $(\mathbf s_{\underline x}, \mathbf s_H)=0$. 
\end{Prop}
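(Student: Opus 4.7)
The plan is to verify the claim by direct computation, unpacking both sides in terms of the standard coordinates and noticing that the bilinear pairing in $\mathbb R^{3,3}$ evaluates, up to a harmless scalar, exactly to the left-hand side of the defining equation (\ref{hyperspheredef}) of $H$ with $\underline x$ substituted in.

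First I would write out the two relevant vectors of diagonal polyspherical coordinates explicitly. By equation (\ref{eq:DPScone}), the DPC of the isotropic cone $C_{\underline x}$ are
\[
\mathbf s_{\underline x} = \left(\tfrac{1-\|\underline x\|^2}{2},\; x_1, x_2, x_3, x_4,\; \tfrac{1+\|\underline x\|^2}{2}\right),
\]
while the hypersphere $H$ has DPC $\mathbf s_H = (s_0, \underline s, s_5)$ satisfying the defining relation
\[
(s_0+s_5)\|\underline y\|^2 - 2\langle \underline s, \underline y\rangle + (-s_0+s_5) = 0
\]
for all $\underline y \in H$.

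Next I would substitute these into the neutral bilinear form (\ref{threethree}). Since the middle four coordinates of $\mathbf s_{\underline x}$ are $(x_1,x_2,x_3,x_4)$, the contribution of coordinates $1$ through $4$ to $(\mathbf s_{\underline x}, \mathbf s_H)$ collapses to $\langle \underline x, \underline s\rangle$ (the signs $+,+,-,-$ match those of $\langle\cdot,\cdot\rangle$ on $\mathbb R^{2,2}$). The outer two coordinates then contribute $\tfrac{1-\|\underline x\|^2}{2}\,s_0 - \tfrac{1+\|\underline x\|^2}{2}\,s_5$. Collecting terms yields
\[
(\mathbf s_{\underline x}, \mathbf s_H) = -\tfrac12\Bigl[(s_0+s_5)\|\underline x\|^2 - 2\langle \underline s, \underline x\rangle + (-s_0+s_5)\Bigr].
\]

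Finally, I would observe that the bracketed quantity is precisely the left-hand side of equation (\ref{hyperspheredef}) evaluated at $\underline x$. Hence $(\mathbf s_{\underline x}, \mathbf s_H)=0$ if and only if $\underline x$ satisfies the defining equation of $H$, i.e.\ $\underline x\in H$, establishing the proposition. There is no real obstacle here: the only thing to keep track of is the signature convention, both in the bilinear form (\ref{threethree}) and in $\langle\cdot,\cdot\rangle$, which is exactly what makes the middle four-coordinate contributions match up cleanly.
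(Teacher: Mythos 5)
Your computation is correct: the identity $(\mathbf s_{\underline x}, \mathbf s_H) = -\tfrac12\bigl[(s_0+s_5)\|\underline x\|^2 - 2\langle \underline s, \underline x\rangle + (-s_0+s_5)\bigr]$ checks out with the sign conventions of (\ref{threethree}) and (\ref{eq:DPScone}), and it immediately gives the equivalence. The paper states this proposition without proof, and your direct verification is exactly the argument it implicitly relies on.
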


Using Proposition \ref{dpstopoints} we can find the points that belong to a hypersphere $H$ in $\mathbb R^{2,2}$ from the diagonal polyspherical coordinates $\mathbf s = (s_0, \underline{s}, s_5)\in \mathbb R^{3,3}$ of $H$. Given the diagonal polyspherical coordinates $\mathbf s = (s_0, \underline{s}, s_5)\in \mathbb R^{3,3}$ of a hypersphere $H$, take the \emph{complementary hyperplane} in $\mathbb R^{3,3}$ of equation
\begin{equation}\label{polarplane} \ s_0x_0 + \langle \underline{s},\underline{x}\rangle -s_5x_5 =0,
\end{equation}
and intersect it with $Q_1$. The preimage of the resulting set under $\phi$ is going to be the set of points in $\mathbb R^{2,2}$ that form $H$.

\vspace{0.1in}

\begin{Ex} 
Consider the DPC of the hypersphere $||\underline{x}||^2=1$, given in Example \ref{standardhyp}. Equation (\ref{polarplane}) becomes $x_0=0$.
When combined with the projective normalization condition (\ref{projcond}) it leads to 
$$
x_0 = 0, \qquad\qquad x_5 =1.
$$
Now intersecting with $Q$ and taking the preimage under $\phi$ we get
$||\underline{x}||^2=1$. For the negative hypersphere $||\underline{x}||^2=-1$ we would have 
\[ 
x_5  = 0\qquad\qquad x_0+x_5  =1, 
\]
so the roles of $x_0$ and $x_5$ are interchanged and we get 
$$
x_0 = 1, \qquad\qquad x_5 = 0.
$$
Intersecting with $Q$ and pulling back under $\phi$ we get $||\underline x||^2=-1$.
\end{Ex}
\vspace{0.1in}

\begin{Ex} Consider the DPC of the hyperplane 
$\langle \underline a, \underline x\rangle = a_1x_1 + a_2x_2 - a_3x_3 - a_4x_4 = b$, which were found to be $(-b,\underline a, b)$ in Example \ref{hypplane}. Equation (\ref{polarplane}) gives
$$
-bx_0 + a_1x_1 + a_2x_2 - a_3x_3 - a_4x_4 - b x_5= 0,$$
which combined with the condition $x_0+x_5=1$ gives the desired result $\langle \underline a, \underline x\rangle = b$.
\end{Ex}
\vspace{0.1in}

\subsection{Orthogonality of hyperspheres}
Let $\mathbf s, \mathbf s'\in\mathbb R^{3,3}$ be the diagonal polyspherical coordinates of two hyperspheres in $\mathbb R^{2,2}$. Recall the neutral bilinear form in the space of polyspherical coordinates given by equation (\ref{threethree}).
\vspace{0.1in}
\begin{Def} Two hyperspheres in $\mathbb R^{2,2}$ having DPC ${\mathbf s}=(s_0, \underline s, s_5)$ and ${\mathbf s'}=(s_0', \underline s', s_5')$ are said to be {\bf orthogonal} when their respective DPC are orthogonal in $\mathbb R^{3,3}$, i.e.
$$
(\mathbf s, \mathbf s'):=s_0s_0' + \langle \underline s,\underline s'\rangle - s_5s_5' =0.
$$
\end{Def}
\vspace{0.1in}
Here is the geometric interpretation of orthogonality of hyperspheres in $\mathbb R^{2,2}$: 
\vspace{0.1in}

\begin{Prop} \label{ortho} The following orthogonality conditions hold:
\begin{enumerate}[label=(\roman*)]
  \item Two proper hyperspheres 
$H_{\underline p,\pm r^2}: ||\underline x -\underline p||^2 = \pm r^2$ and $H_{\underline p',\pm r'^2}: ||\underline x -\underline p'||^2 = \pm r'^2$
are orthogonal if and only if
$||\underline p -\underline p'||^2 = \pm r^2 \pm r'^2$.

\item A proper hypersphere $H_{\underline p,\pm r^2}$ and an isotropic cone $C_{\underline p'}$ are orthogonal if and only if $\underline p'\in H_{\underline p,\pm r^2}$, i.e. $||\underline p -\underline p'||^2 = \pm r^2$.

\item Two isotropic cones $C_{\underline p}$ and $C_{\underline p'}$ are orthogonal if and only if $\underline p$ and $\underline p'$ are null-separated, i.e. $||\underline p-\underline p'||^2=0$.

\item Two hyperplanes $\langle \underline a,\underline x\rangle = b$ and $\langle \underline a',\underline x\rangle = b'$ are orthogonal if and only if $\langle \underline a,\underline a'\rangle =0.$

\item A hyperplane $\langle \underline a, \underline x\rangle =b$ and a hypersphere $H_{\underline p, \pm r^2}$ are orthogonal if and only if
the center $\underline p$ belongs to the hyperplane, i.e. $\langle \underline a, \underline p\rangle =b$
\end{enumerate}
\end{Prop}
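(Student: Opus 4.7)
The strategy is entirely computational: for each pair of hyperspheres, substitute the diagonal polyspherical coordinates recorded in Examples \ref{standardhyp}--\ref{hypplane} into the bilinear form (\ref{threethree}) and show that the result vanishes if and only if the stated geometric condition holds. Since items (ii) and (iii) are the limiting cases of (i) with one or both radii set to zero (an isotropic cone being exactly a proper hypersphere of radius-square $0$), I would first dispatch (i), then remark that (ii) and (iii) follow immediately, and finally handle the two hyperplane cases (iv) and (v).

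For (i), writing $\mathbf s_H = \bigl(\tfrac{1-\|\underline p\|^2\pm r^2}{2},\underline p,\tfrac{1+\|\underline p\|^2\mp r^2}{2}\bigr)$ and similarly for $\mathbf s_{H'}$, the key is to notice that if $A=\tfrac{1-\|\underline p\|^2\pm r^2}{2}$ and $B=\tfrac{1+\|\underline p\|^2\mp r^2}{2}$ then $A+B=1$, so $B=1-A$, and similarly $B'=1-A'$. The product difference collapses cleanly:
\[
AA'-BB' = AA'-(1-A)(1-A') = A+A'-1.
\]
Substituting back gives $s_0s_0'-s_5s_5' = \tfrac12\bigl(-\|\underline p\|^2-\|\underline p'\|^2\pm r^2\pm r'^2\bigr)$. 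Combining with $\langle \underline s,\underline s'\rangle=\langle \underline p,\underline p'\rangle$ yields
\[
(\mathbf s_H,\mathbf s_{H'}) = -\tfrac12\|\underline p-\underline p'\|^2 \pm \tfrac12 r^2 \pm \tfrac12 r'^2,
\]
and orthogonality $(\mathbf s_H,\mathbf s_{H'})=0$ is then exactly $\|\underline p-\underline p'\|^2=\pm r^2\pm r'^2$. Specializing $r'^2=0$ gives (ii) and $r^2=r'^2=0$ gives (iii).

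For (iv) and (v) the computations are shorter. Two hyperplanes have DPC $(-b,\underline a,b)$ and $(-b',\underline a',b')$; direct substitution yields $(\mathbf s,\mathbf s') = bb'+\langle \underline a,\underline a'\rangle - bb' = \langle \underline a,\underline a'\rangle$, proving (iv). For (v), pairing $(-b,\underline a,b)$ with the proper hypersphere DPC above, the coefficients of $\|\underline p\|^2$ and $r^2$ cancel between $s_0s_0'$ and $s_5s_5'$, and one is left with $-b+\langle \underline a,\underline p\rangle$, which vanishes precisely when $\underline p$ lies on the hyperplane.

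The only nontrivial step is the algebraic identity $AA'-BB'=A+A'-1$ used in (i); once that is observed, every case reduces to an elementary substitution. There is no genuine geometric obstacle here, since the bilinear form on $\mathbb R^{3,3}$ was engineered (via the diagonalization of the polyspherical quadratic form) precisely so that these five orthogonality conditions take their classical form.
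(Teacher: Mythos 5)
Your proof is correct and is exactly the routine verification the paper intends (the proposition is stated there without proof): substitute the DPC into the bilinear form, use $s_0+s_5=1$ to collapse $s_0s_0'-s_5s_5'$, and read off each condition. All five cases, including the reduction of (ii) and (iii) to (i) via zero radius-square, check out.
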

\vspace{0.1in}

\begin{Lem}\label{distvsprod}
Let $\mathbf s_{\underline a}$ (resp $\mathbf s_{\underline a'}$) be the diagonal polyspherical coordinates of the isotropic cones at $\underline a$ (resp $\underline a'$). Then 
$$||\underline a-\underline a'||^2 = -2 (\mathbf s_{\underline a}, \mathbf s_{\underline a'}) =  -2 \left(s_0s_0'+\langle\underline{s},\underline{s'}\rangle - s_5s_5'\right).$$
In particular, two points in $\mathbb R^{2,2}$ are null-separated, i.e. $||\underline a-\underline a'||^2=0$, if and only if the DPC of its two respective isotropic cones are orthogonal in $\mathbb R^{3,3}$.
\end{Lem}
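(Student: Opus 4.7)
The plan is to prove the identity by direct substitution using the explicit formula (\ref{eq:DPScone}) for the diagonal polyspherical coordinates of an isotropic cone, and then expand both sides to compare.

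First, I would write out
\[
\mathbf s_{\underline a} = \left(\tfrac{1-||\underline a||^2}{2},\underline a,\tfrac{1+||\underline a||^2}{2}\right),\qquad
\mathbf s_{\underline a'} = \left(\tfrac{1-||\underline a'||^2}{2},\underline a',\tfrac{1+||\underline a'||^2}{2}\right),
\]
and substitute into the neutral bilinear form (\ref{threethree}). Abbreviating $A:=||\underline a||^2$ and $A':=||\underline a'||^2$, the computation reduces to
\[
(\mathbf s_{\underline a},\mathbf s_{\underline a'}) = \frac{(1-A)(1-A')-(1+A)(1+A')}{4}+\langle\underline a,\underline a'\rangle.
\]
The first fraction simplifies: the cross terms $AA'$ cancel and the constant $1$'s cancel, leaving $-(A+A')/2$. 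Hence
\[
(\mathbf s_{\underline a},\mathbf s_{\underline a'}) = -\tfrac{1}{2}\bigl(||\underline a||^2+||\underline a'||^2\bigr)+\langle\underline a,\underline a'\rangle.
\]

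Multiplying by $-2$ and comparing with the expansion
\[
||\underline a-\underline a'||^2 = ||\underline a||^2-2\langle\underline a,\underline a'\rangle+||\underline a'||^2
\]
yields the claimed identity immediately. The second assertion follows at once: since $||\underline a-\underline a'||^2 = -2(\mathbf s_{\underline a},\mathbf s_{\underline a'})$, the left side vanishes if and only if the right side does, which recovers part (iii) of Proposition \ref{ortho}.

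This is an essentially routine computation, so there is no substantive obstacle; the only care needed is to track signs when expanding the difference of products $(1-A)(1-A')-(1+A)(1+A')$ and to use the correct $(-)$-sign on the $s_5 s_5'$ term coming from the signature $(3,3)$ of the ambient form.
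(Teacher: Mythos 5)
Your computation is correct: substituting the DPC from equation (\ref{eq:DPScone}) into the bilinear form (\ref{threethree}) gives $(\mathbf s_{\underline a},\mathbf s_{\underline a'}) = -\tfrac{1}{2}\left(||\underline a||^2+||\underline a'||^2\right)+\langle\underline a,\underline a'\rangle$, which is exactly $-\tfrac{1}{2}||\underline a-\underline a'||^2$. The paper leaves this lemma as a routine verification, and your direct expansion is precisely the intended argument.
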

\vspace{0.1in}

\subsection{Hyperplanes vs proper hyperspheres}

A closer look at the defining equation of a hypersphere (\ref{hyperspheredef}) reveals that $(s_0, \underline s, s_5)$ are the DPC of a hyperplane if and only if $s_0+s_5=0$ and $\underline s \neq 0$. Define the linear $5$-subspace $\mathcal P$ of $\mathbb R^{3,3}$ as
\begin{equation}\label{Psubspace}
\mathcal P:=\{ (s_0, \underline s, s_5)\in\mathbb R^6 \ | \ s_0 + s_5 = 0\}.
\end{equation}
The $5$-subspace $\mathcal P$ of $\mathbb R^{3,3}$ is degenerate and thus contains its complement in $\mathbb R^{3,3}$:
$$
\mathcal P^\perp = \spn{(1, \underline 0, -1)}\subset \mathcal P.
$$
Table \ref{hpshpp} clarifies the distinction between the diagonal polyspherical coordinates of proper hyperspheres and those of hyperplanes:
\vspace{0.1in}

\begin{table}[h]
\begin{tabular}{|l|l|l|}
\hline
&&\\
\textit{$\mathbb R^{2,2}$ hypersphere type} & \textit{DPC} & \textit{belongs to} \\ 
&&\\
\hline
&&\\
Proper hypersphere & $(s_0, \underline s, s_5)$ & $\mathbb R^{3,3}\setminus\mathcal P$ \\ 
&&\\
\hline
&&\\
Hyperplane & $(s_0, \underline s, -s_0)$ & $\mathcal P \setminus \mathcal P^\perp$ \\ 
&&\\
\hline
&&\\
Empty & $(s_0, \underline 0, -s_0)$ & $\mathcal P^\perp$ \\ &&\\
\hline
\end{tabular}
\vspace{0.1in}
\caption{Distinction between DPC of proper hyperspheres and hyperplanes.}
\label{hpshpp}
\end{table}
\vspace{0.1in}
\subsection{Conformal transformations}
To characterize conformal transformations of pseudo-Euclidean space $\mathbb R^{2,2}$ it is convenient to regard its points as isotropic cones. 
A point $\underline x\in \mathbb R^{2,2}$ can be uniquely identified with the isotropic cone $C_{\underline x}$, having polyspherical coordinates ${\mathbf s}_{\underline x}$. This provides an embedding of the space $\mathbb R^{2,2}$ into the null cone $Q\subset \mathbb R^{3,3}$ from Definition \ref{nullconeQ}. 

In the null cone $Q$ of $\mathbb R^{3,3}$ one can find \emph{both} the diagonal polyspherical coordinates of null-hyperplanes \emph{as well as} the diagonal polyspherical coordinates of isotropic cones, so the correspondence provided is not surjective. 

Let $J:=\text{diag}(1,1,1,-1,-1,-1)$  so that $(s,s')=s^TJs'$. Define
$$
O^\pm(3,3)= \{\Lambda \in GL(6) \ | \ \Lambda^T J \Lambda = \pm J \}.
$$

\begin{Prop} \label{O33} Let $f$ be a conformal map of $\mathbb R^{2,2}$. Then there exists a linear map $\Lambda$ in $O^\pm(3,3)$ satisfying
$$
\mathbf s_{f(\underline x)} = \Lambda \mathbf s_{\underline x},
$$
for all $\underline x\in\mathbb R^{2,2}$.
\end{Prop}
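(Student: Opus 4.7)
The plan is to invoke a Liouville-type theorem in signature $(2,2)$, which identifies the conformal group of $\mathbb R^{2,2}$ with the group generated by translations, linear maps in $O(2,2)$, positive dilations, inversions with respect to proper hyperspheres of either signature, and the signature-reversing anti-isometries; then to lift each generator explicitly to a linear action on diagonal polyspherical coordinates lying in $O^\pm(3,3)$. Since DPC are defined projectively, the equality $\mathbf s_{f(\underline x)} = \Lambda \mathbf s_{\underline x}$ is read up to an overall nonzero scalar, and composition of conformal maps corresponds to composition of lifts, so the claim reduces to verifying it on each generator.

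The verification then proceeds case by case. For a translation $\underline x \mapsto \underline x + \underline b$, one uses formula (\ref{eq:DPScone}) together with the projective normalization $s_0 + s_5 = 1$ to write $\mathbf s_{\underline x + \underline b}$ as a linear function of $\mathbf s_{\underline x}$; a short block calculation shows the resulting $6\times 6$ matrix preserves $J$, so it lies in $O^+(3,3)$. A linear isometry $\underline x \mapsto A\underline x$ with $A \in O(2,2)$ lifts trivially to $\mathrm{diag}(1, A, 1) \in O^+(3,3)$. A dilation $\underline x \mapsto \lambda\underline x$ lifts to the block matrix with middle block $\lambda I_4$ and outer $(s_0, s_5)$-block $\bigl(\begin{smallmatrix} (1+\lambda^2)/2 & (1-\lambda^2)/2 \\ (1-\lambda^2)/2 & (1+\lambda^2)/2 \end{smallmatrix}\bigr)$, which satisfies $\Lambda^T J \Lambda = \lambda^2 J$; after projectively rescaling by $1/\lambda$ this becomes an element of $O^+(3,3)$. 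For the inversion $f(\underline x) = \underline x/||\underline x||^2$ in the unit positive hypersphere, a direct computation gives $||\underline x||^2\, \mathbf s_{f(\underline x)} = (-s_0, \underline s, s_5)$, so the lift is $\mathrm{diag}(-1, I_4, 1) \in O^+(3,3)$. Finally, the signature-reversing anti-isometry $(x_1,x_2,x_3,x_4) \mapsto (x_3, x_4, x_1, x_2)$, which satisfies $||f(\underline x)||^2 = -||\underline x||^2$, lifts to a signed permutation $\Lambda$ on DPC whose associated form on $\mathbb R^{3,3}$ obeys $(\Lambda \mathbf s, \Lambda \mathbf s') = -(\mathbf s, \mathbf s')$; this lies in $O^-(3,3)$, and is precisely what forces the $\pm$ in the statement.

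Combining these by composition, every conformal diffeomorphism lifts to an element of $O^\pm(3,3)$, and $\mathbf s_{f(\underline x)} = \Lambda \mathbf s_{\underline x}$ holds projectively. The main obstacle is the Liouville-type input: one must be confident that in neutral signature $(2,2)$ the full conformal group really is generated by the listed transformations, and in particular must account for the signature-reversing maps that do not exist in Riemannian signature but give rise to the $O^-$ component here. A second, lesser subtlety is tracking the projective rescaling required for dilations so that the $O^\pm$ condition is met on the nose. Once these structural points are in hand, each individual verification is a one-line linear-algebra check using the formula (\ref{eq:DPScone}) and the normalization (\ref{projcond}).
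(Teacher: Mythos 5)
Your proof is correct, but it takes a genuinely different route from the paper. The paper's argument is essentially a two-line citation: it invokes \cite{russian} and \cite{rosenfeld} for the existence of \emph{some} linear map $\Lambda$ on polyspherical coordinates with $\mathbf s_{f(\underline x)} = \Lambda \mathbf s_{\underline x}$, then appeals to \cite{cecil} for the fact that any linear map preserving the null cone of a nondegenerate form satisfies $\Lambda^T J \Lambda = \lambda J$ for some $\lambda \neq 0$, and finally rescales $\Lambda$ by $|\lambda|^{-1/2}$ to land in $O^\pm(3,3)$. You instead make the lift explicit: you take the Liouville decomposition of the conformal group of $\mathbb R^{2,2}$ into translations, $O(2,2)$, dilations, inversions and the signature-reversing swap, and exhibit a matrix in $O^\pm(3,3)$ for each generator. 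Your individual computations all check out (in particular $||\underline x||^2\,\mathbf s_{\underline x/||\underline x||^2} = (-s_0,\underline s, s_5)$ and the fact that the swap $(x_1,x_2,x_3,x_4)\mapsto(x_3,x_4,x_1,x_2)$ reverses the sign of the $\mathbb R^{3,3}$ form, which is exactly where the $O^-$ component comes from). What the paper's route buys is brevity and independence from a generation theorem; what yours buys is self-containedness modulo Liouville, plus explicit matrices that are reusable later (e.g.\ for the transitivity of $O^\pm(3,3)$ on indefinite $3$-subspaces invoked in Proposition \ref{ndccs2}, and for seeing directly that conformal maps preserve the metric type of associated subspaces). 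Two small points to tighten: state that the Liouville theorem applies because $\dim \mathbb R^{2,2} = 4 \geq 3$, and note that the projective rescaling of the dilation lift by $1/\lambda$ works for either sign of $\lambda$ since $(\Lambda/\lambda)^T J (\Lambda/\lambda) = J$ whenever $\Lambda^T J \Lambda = \lambda^2 J$. Like the paper, you are implicitly ignoring that inversions are only defined off the isotropic cone of their centre, so the lifted identity should be read on the domain of definition of $f$.
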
 
\begin{proof}
From \cite{russian} \cite{rosenfeld} we know of the existence of a linear transformation $\Lambda$ preserving the null-cone in $\mathbb R^{3,3}$ satisfying $\mathbf s_{f(\underline x)} = \Lambda \mathbf s_{\underline x}$ for all $\underline x\in\mathbb R^{2,2}$. If $\Lambda$ takes null vectors to null vectors, then by \cite{cecil} there exists a $\lambda\neq 0$ so that $\Lambda^T J \Lambda = \lambda J$. A scaling of the matrix $\Lambda$ gives the result.
\end{proof}
\vspace{0.1in}

To construct the most general conformal transformation $f$ of $\mathbb R^{2,2}$ fix a linear transformation $\Lambda\in O^\pm(3,3)$. We now explain how the image $f(\underline x)$ of an arbitrary point $\underline x$ is obtained. Let ${\mathbf s}_{\underline x}$ be the DPC of the isotropic cone $C_{\underline x}$. The vector $\Lambda {\mathbf s}_{\underline x}$ corresponds to the DPC of the isotropic cone with center $f(\underline x)$. 

In the next section the conformal geometry of lower dimensional spheres in $\mathbb R^{2,2}$ will be translated into the action of linear maps of $\mathbb R^{3,3}$ preserving the null cone $Q$. It is important to remark that these maps preserve the metric type of linear subspaces.  

\vspace{0.1in}
\subsection{$m$-spheres of $\mathbb R^{2,2}$}
\vspace{0.1in}
\begin{Def} \label{mspheres} An \emph{$m$-sphere} of $\mathbb R^{2,2}$ is  defined to be the intersection of $4-m$ hyperspheres, for all $1\leq m \leq 3$. A $3$-sphere is also called a \emph{hypersphere}, and a $1$-sphere is also called a \emph{conic}.
\end{Def}
\vspace{0.1in}

There is a correspondence between $m$-spheres of $\mathbb R^{2,2}$ and linear $4-m$ subspaces $V\subset\mathbb R^{3,3}$ spanned by the diagonal polyspherical coordinates of the generating hyperspheres.

\vspace{0.1in}
\begin{Def} 
Let $S$ be an $m$-sphere given by the intersection of $4-m$ hyperspheres $H_1, H_2, \ldots, H_{4-m} \subset \mathbb R^{2,2}$ with diagonal polyspherical coordinates $\mathbf s_1, \mathbf s_2, \ldots, \mathbf s_{4-m} \in\mathbb R^{3,3}$ respectively. Define the \emph{associated $(4-m)$-subspace} $V_S\subset\mathbb R^{3,3}$ to be
$$V_S:=\spn\{\mathbf s_1, \mathbf s_2, \ldots, \mathbf s_{4-m}\}.$$
\end{Def}
\vspace{0.1in}

Table \ref{mspheres3} shows this correspondence for all possible $m$-spheres of $\mathbb R^{2,2}$:
\vspace{0.1in}

\begin{table}[h]

\begin{tabular}{|l|l|l|}
\hline
&&\\
$m$ & $S\subset \mathbb R^{2,2}$ & $V\subset \mathbb R^{3,3}$                   \\ 
&&\\
\hline
&&\\
1   & $H_1\cap H_2\cap H_3$         & $\spn\{\mathbf s_1, \mathbf s_2, \mathbf s_3\}$ \\ 
&&\\
\hline
&&\\
2   & $H_1\cap H_2$                 & $\spn\{\mathbf s_1, \mathbf s_2\}$              \\ 
&&\\
\hline
&&\\
3   & $H_1$                        & $\spn\{\mathbf s_1\}$                          \\ 
&&\\
\hline
\end{tabular}
\vspace{0.1in}
\caption{Correspondence between $m$-spheres in $\mathbb R^{2,2}$ and linear subspaces in $\mathbb R^{3,3}$. }
\label{mspheres3}
\end{table}
\vspace{0.1in}

\begin{Prop} \label{welldefasssubspace}
The associated subspace of an $m$-sphere $S$ is well-defined, in the sense that if $S$ is obtained as intersection of hyperspheres in two different ways, 
$$S = \bigcap_{i\in I} H_i = \bigcap_{i\in I} H_i', \qquad I=\{1,\ldots, 4-m\},$$ 
then the span of the diagonal polyspherical coordinates $\mathbf s_i$ of $H_i$ agrees with the span of the diagonal polyspherical coordinates $\mathbf s_i'$ of $H_i'$. 
\end{Prop}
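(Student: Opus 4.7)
The strategy is to identify both spans with an intrinsically defined subspace of $\mathbb R^{3,3}$ attached to $S$ itself. Define
\[
V_S := \{\mathbf s\in\mathbb R^{3,3} : (\mathbf s, \mathbf s_{\underline x})=0 \text{ for all } \underline x\in S\} = (\spn\phi(S))^\perp,
\]
which depends only on $S$ and not on any presentation. By Proposition \ref{dpstopoints}, $\mathbf s\in V_S$ if and only if the hypersphere with diagonal polyspherical coordinates $\mathbf s$ contains $S$; hence every $\mathbf s_i$ and every $\mathbf s_i'$ belongs to $V_S$, yielding
\[
\spn\{\mathbf s_1,\ldots,\mathbf s_{4-m}\}\ \subseteq\ V_S\ \supseteq\ \spn\{\mathbf s_1',\ldots,\mathbf s_{4-m}'\}.
\]

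To upgrade both inclusions to equalities it suffices to prove the dimension equality $\dim V_S = 4-m$, equivalently $\dim\spn\phi(S) = m+2$. Given this, linear independence of each family $\{\mathbf s_i\}$ and $\{\mathbf s_i'\}$ follows automatically from $S$ being a genuine $m$-sphere: if some $\mathbf s_k$ were a linear combination of the others, then the equation of $H_k$ would be a linear combination of the equations of the remaining $H_i$, so $H_k$ would already contain $\bigcap_{i\neq k} H_i$ and $S$ would be cut out by fewer than $4-m$ hyperspheres, contradicting the fact that it has the expected dimension $m$. Once independence is established, each span has dimension exactly $4-m$, and since both are contained in $V_S$ of the same dimension, both fill out all of $V_S$ and coincide.

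The main step is therefore the dimension count $\dim\spn\phi(S) = m+2$. I would prove it by invoking Proposition \ref{O33}: any conformal map of $\mathbb R^{2,2}$ acts on $\mathbb R^{3,3}$ by an element of $O^\pm(3,3)$, which preserves $Q$, preserves the containment relation $S\subseteq H$ between hyperspheres, and preserves dimensions of linear spans. This allows one to transport $S$ to a canonical model in which the generating hyperspheres take a simple form (standard positive or negative hyperspheres, coordinate hyperplanes, or isotropic cones at coordinate points, according to the metric type of the subspace containing $S$). For each such model, $\phi(\underline x)$ admits an explicit parameterization from which one reads off directly that $\spn\phi(S)$ has dimension $m+2$.

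\textbf{Main obstacle.} The technical heart is the dimension count $\dim\spn\phi(S)=m+2$. Via the normal-form route, one must enumerate finitely many conformal equivalence classes of $m$-spheres and verify the span dimension in each, which leans on a classification not yet fully in hand at this point of the paper. A uniform alternative is to fix a basepoint $\underline x_0\in S$ and analyse the differences $\phi(\underline x)-\phi(\underline x_0)\in\mathcal P$: these take the form $\bigl(-q/2,\ \underline x-\underline x_0,\ q/2\bigr)$ with $q=||\underline x||^2-||\underline x_0||^2$, and the extra linear direction beyond the affine span of $\{\underline x-\underline x_0 : \underline x\in S\}$ comes precisely from the quadratic contribution $q$. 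The delicate point in this second approach is verifying that $q$ genuinely contributes an independent direction in each metric type --- definite, indefinite, and degenerate containing plane alike --- rather than being linearly determined by the affine displacement.
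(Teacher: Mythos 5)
The paper states this proposition without proof, so there is nothing to compare against; the only question is whether your argument closes it, and it does not: the entire content of the proposition has been repackaged into the unproved claim $\dim\spn\phi(S)=m+2$. Writing $W=\spn\{\mathbf s_1,\ldots,\mathbf s_{4-m}\}$, Proposition \ref{dpstopoints} gives $\phi(S)=Q_1\cap W^\perp$, so your claim is precisely the assertion that $W^\perp$ is spanned by $Q_1\cap W^\perp$ --- which is equivalent to the well-definedness you are trying to prove. Neither of your two routes establishes it: the normal-form route presupposes a conformal classification of $m$-spheres that is not in hand (and is never carried out in the paper), and the basepoint route is left as a heuristic. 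Moreover that heuristic points the wrong way: whenever $S$ lies on at least one \emph{proper} hypersphere $||\underline x-\underline a||^2=\pm r^2$, the function $||\underline x||^2$ is affine on $S$, so $q$ contributes \emph{no} independent direction and the count $m+2=1+(m+1)$ is carried entirely by the $(m+1)$-dimensional affine span of $S$; the quadratic term supplies the extra direction only in the opposite regime, when every $H_i$ is a hyperplane and $S$ is an affine $m$-plane. So ``verifying that $q$ genuinely contributes an independent direction in each metric type'' is not the statement you need.

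A second, more serious point is that your key lemma --- and with it the proposition as literally stated --- is false without a transversality hypothesis, because Definition \ref{mspheres} does not force an $m$-sphere to have dimension $m$. Take the hyperplanes $x_1=0$, $x_1=1$, $x_2=0$ versus $x_1=0$, $x_1=1$, $x_3=0$: both triples intersect in the empty set, yet $\spn\{(0,1,0,0,0,0),(-1,1,0,0,0,1),(0,0,1,0,0,0)\}$ and $\spn\{(0,1,0,0,0,0),(-1,1,0,0,0,1),(0,0,0,1,0,0)\}$ are different $3$-subspaces. Likewise the single point $\underline 0=C_{\underline 0}\cap\{x_3=0\}\cap\{x_4=0\}=C_{\underline 0}\cap\{x_1=0\}\cap\{x_2=0\}$ yields two different associated subspaces. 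In such cases $\phi(S)$ spans too little, your $V_S=(\spn\phi(S))^\perp$ has dimension strictly greater than $4-m$, and the sandwich argument collapses; your linear-independence step also appeals to ``$S$ has the expected dimension $m$,'' which is not part of the definition and fails here. A complete proof must either restrict to the case where $W$ is $(4-m)$-dimensional with $W^\perp=\spn(Q_1\cap W^\perp)$ --- which does hold in the situations the paper actually uses, e.g.\ conics with indefinite associated $3$-subspace --- and prove that span identity there, or else acknowledge the degenerate exceptions. As written, the crux is assumed rather than proved.
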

\vspace{0.1in}
\begin{Cor}
Every choice of basis choice of basis of a subspace $V\subset \mathbb R^{3,3}$ gives a different way to obtain the same $m$-sphere as an intersection of different hyperspheres.
\end{Cor}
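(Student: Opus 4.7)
The plan is to use Proposition \ref{dpstopoints} to translate membership in each generating hypersphere into an orthogonality condition in $\mathbb{R}^{3,3}$, and then observe that this condition depends only on the subspace $V$, not on the basis chosen. Concretely, if $\{\mathbf{s}_1, \ldots, \mathbf{s}_{4-m}\}$ is any basis of $V$ and $H_1, \ldots, H_{4-m}$ are the corresponding hyperspheres (obtained by reading the $\mathbf{s}_i$ as DPC), then by Proposition \ref{dpstopoints},
\[
\underline{x} \in \bigcap_{i=1}^{4-m} H_i
\iff (\mathbf{s}_{\underline{x}}, \mathbf{s}_i) = 0 \text{ for every } i
\iff \mathbf{s}_{\underline{x}} \in V^{\perp},
\]
where the last equivalence uses the standard fact that orthogonality to a spanning set is equivalent to orthogonality to the full subspace it spans.

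Since the right-most condition references only $V$, for any other basis $\{\mathbf{s}_1', \ldots, \mathbf{s}_{4-m}'\}$ of the same subspace the associated hyperspheres $H_1', \ldots, H_{4-m}'$ produce exactly the same intersection, namely $S$. To complete the corollary one only needs to note that distinct bases of $V$ do in fact yield genuinely different tuples of hyperspheres: the DPC-to-hypersphere correspondence is a bijection up to nonzero scalar, so two vectors of $V$ that are not proportional correspond to distinct hyperspheres, and any change-of-basis matrix that is not diagonal produces a different tuple $(H_1', \ldots, H_{4-m}')$ than $(H_1, \ldots, H_{4-m})$.

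I do not anticipate any genuine obstacle here. The corollary is essentially the converse of Proposition \ref{welldefasssubspace}: that proposition says the associated subspace $V_S$ is intrinsic to $S$, and the corollary goes in the opposite direction, saying that $S$ itself can be recovered from $V_S$ through any choice of basis, with different bases simply giving different factorizations of $S$ as an intersection of hyperspheres.
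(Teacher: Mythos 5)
Your argument is correct and is exactly the reasoning the paper leaves implicit: the corollary is stated without proof, and the intended justification is precisely that, via Proposition \ref{dpstopoints} (equivalently Proposition \ref{propasssubspace}(2)), the intersection of the hyperspheres attached to any basis of $V$ is $\{\underline x : \mathbf s_{\underline x}\in V^\perp\}$, which depends only on $V$. The only tacit assumption worth flagging is that no basis vector lies in $\mathcal P^\perp$ (such vectors are ``empty'' per Table \ref{hpshpp} and correspond to no hypersphere), but this is automatic whenever $V$ is genuinely the associated subspace of an $m$-sphere.
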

\vspace{0.1in}

\begin{Prop} \label{propasssubspace} The following properties of the associated subspace hold:

\begin{enumerate}
\item The associated subspace of an $m$-sphere respects the incidence relation between lower dimensional spheres in $\mathbb R^{2,2}$. That is, if $S$ is an $m$-sphere and $S'$ an $m'$-sphere with $m\leq m'$ then $S\subset S'$ if and only if $V_{S'} \subset V_S$. 

\item Let $S$ be an $m$-sphere with associate subspace $V\subset\mathbb R^{3,3}$, and let $\underline x\in\mathbb R^{2,2}$. Then the point $\underline x\in S$ if and only if the polyspherical coordinates $\mathbf s_{\underline x}$ of the cone $C_{\underline x}$ are orthogonal to $V$, i.e.  ${\mathbf s}_{\underline x}\in V^\perp.$
\end{enumerate}
\end{Prop}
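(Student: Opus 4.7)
The plan is to establish part (2) first by direct appeal to Proposition \ref{dpstopoints}, and then to deduce part (1) by exploiting the non-degeneracy of the bilinear form on $\mathbb{R}^{3,3}$ together with a spanning lemma for $V_S^\perp$.

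For part (2), write $S = H_1 \cap \cdots \cap H_{4-m}$ with the diagonal polyspherical coordinates $\mathbf{s}_i$ of $H_i$ forming a basis of $V_S$. Proposition \ref{dpstopoints} says $\underline{x} \in H_i$ iff $(\mathbf{s}_{\underline{x}}, \mathbf{s}_i) = 0$. Hence $\underline{x} \in S$ iff $(\mathbf{s}_{\underline{x}}, \mathbf{s}_i) = 0$ for every $i$, which by bilinearity is equivalent to $\mathbf{s}_{\underline{x}}$ being orthogonal to every element of $V_S$, i.e.\ $\mathbf{s}_{\underline{x}} \in V_S^\perp$. Proposition \ref{welldefasssubspace} ensures the conclusion does not depend on the choice of generators.

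For part (1), the implication $V_{S'} \subset V_S \Longrightarrow S \subset S'$ is immediate: non-degeneracy of the form on $\mathbb{R}^{3,3}$ gives $V_S^\perp \subset V_{S'}^\perp$, and part (2) then shows every point of $S$ also lies in $S'$. The converse carries the real content. Assuming $S \subset S'$ and applying the bijection $\phi : \mathbb{R}^{2,2} \to Q_1$, part (2) rewrites the hypothesis as
\[
Q_1 \cap V_S^\perp \;=\; \phi(S) \;\subset\; \phi(S') \;=\; Q_1 \cap V_{S'}^\perp \;\subset\; V_{S'}^\perp.
\]
To upgrade this to $V_S^\perp \subset V_{S'}^\perp$ (which by non-degeneracy is equivalent to $V_{S'} \subset V_S$), it suffices to establish the spanning lemma $\spn\bigl(Q_1 \cap V_S^\perp\bigr) = V_S^\perp$; then every $\mathbf v\in V_S^\perp$ is a linear combination of elements of $V_{S'}^\perp$ and so lies in $V_{S'}^\perp$.

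This spanning lemma is where I expect the main difficulty. The inclusion $\spn(Q_1 \cap V_S^\perp) \subset V_S^\perp$ is automatic. For the reverse inclusion, I would use the graph parameterization $\mathbf{s}_{\underline{x}} = \bigl(\tfrac{1-\|\underline{x}\|^2}{2},\,\underline{x},\,\tfrac{1+\|\underline{x}\|^2}{2}\bigr)$ of $Q_1$ over $\mathbb{R}^{2,2}$, and proceed by casework on the signature of the induced metric on $V_S$ (equivalently on $V_S^\perp$), producing enough points $\underline{x} \in S$ whose DPC span $V_S^\perp$. The line-empty case of Theorem \ref{equivdefs}, in which $S$ is empty, is genuinely degenerate and has to be excluded or handled by a separate convention, but whenever $S$ contains sufficiently many points this direct computation closes the argument.
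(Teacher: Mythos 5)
The paper states Proposition \ref{propasssubspace} without proof, so there is no argument of the authors to compare yours against. Judged on its own merits, your proof of part (2) is complete and correct, as is the implication $V_{S'}\subset V_S\Rightarrow S\subset S'$ in part (1) (note that $U\subset W\Rightarrow W^\perp\subset U^\perp$ needs no non-degeneracy; non-degeneracy enters only in the converse via $U=U^{\perp\perp}$). These also happen to be the only pieces of the proposition the paper later invokes: Lemma \ref{vcapp} uses the easy implication of (1), and Proposition \ref{3pointscondition} uses part (2). Your reduction of the remaining implication to the spanning statement $\spn(Q_1\cap V_S^\perp)=V_S^\perp$ is the right reduction.

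The gap is that this spanning statement is left unproved, and as stated it is false, so the casework you defer cannot close it without first adding a hypothesis to the proposition itself. Concretely, take $H_1:||\underline x||^2=1$, $H_2:x_1=0$, $H_3:x_2=0$, with DPC $(1,0,0,0,0,0)$, $(0,1,0,0,0,0)$, $(0,0,1,0,0,0)$. Then $V_S$ is positive definite, $V_S^\perp$ is negative definite, $Q\cap V_S^\perp=\{\mathbf 0\}$, and $S=\{\underline x : -x_3^2-x_4^2=1\}=\emptyset$; since $\emptyset\subset S_0$ while $V_{S_0}\not\subset V_S$, the ``only if'' of part (1) fails outright, not merely your proof of it. Nonemptiness of $S$ does not rescue it either: if $V_S^\perp$ is degenerate with definite non-degenerate part, its null vectors are exactly its radical, so $S$ degenerates to a single point or a null line whose DPC span a proper subspace of $V_S^\perp$, and again $S\subset S'$ does not force $V_{S'}\subset V_S$. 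The statement you need is: if $S\neq\emptyset$ and the non-degenerate part of $V_S^\perp$ is indefinite, then the null vectors of $V_S^\perp$ with $s_0+s_5\neq 0$ span $V_S^\perp$ (an indefinite non-degenerate space is spanned by its null cone, and the null vectors lying in $\mathcal P$ form a proper subvariety of that cone once one such vector lies outside $\mathcal P$, which is guaranteed by $S\neq\emptyset$). Under that hypothesis --- which covers the non-degenerate conics and the affine planes that actually occur in Theorem \ref{equivdefs} --- your argument closes; without it, part (1) should be stated only as the one-sided implication you did prove.
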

\vspace{0.1in}

For the rest of this section we focus on the description of conics. The following type of conics will be important in our discussion:    
\vspace{0.1in}

\begin{Def}
A $1$-sphere or conic is \emph{non-degenerate} whenever its associated $3$-subspace is an indefinite subspace in $\mathbb R^{3,3}$.
\end{Def}
\vspace{0.1in}

\begin{Ex}\label{S0V0}
Consider the standard circle $S_0$ defined by the intersection of the hyperspheres $H: H_{\underline 0, 1}=\{||\underline x||^2 = 1$\}, $H': x_3=0$ and $H'': x_4=0$ with DPC $\mathbf s = (1, 0,0,0,0, 0)$, $\mathbf s' = (0, 0, 0 ,1, 0, 0)$ and $\mathbf s'' = (0, 0,0,0,1, 0)$ respectively. Its associated $3$-subspace $V_0$ is given by
\begin{equation}\label{V0}
V_0=\spn\{(1, 0,0,0,0, 0),(0, 0, 0 ,1, 0, 0),(0, 0,0,0,1, 0)\}.
\end{equation}
The subspace $V_0\subset \mathbb R^{3,3}$ has indefinite signature $(+--)$, so it follows that $S_0$ is non-degenerate.

Similarly, it can be seen that the associated $3$-subspace $V_0^\perp$ of $S_0^\perp$ is given by
\begin{equation}\label{V0perp}
V_0^\perp=\spn\{(0, 0,0,0,0, 1),(0, 1, 0, 0, 0, 0),(0, 0, 1,0,0, 0)\}.
\end{equation}
The subspace $V_0^\perp\subset \mathbb R^{3,3}$  has indefinite signature $(-++)$ and hence $S_0^\perp$ is a non-degenerate conic. 

\end{Ex}
\vspace{0.1in}

\begin{Lem} \label{vcapp}
A conic $S$ in $\mathbb R^{2,2}$ is generically contained in a flat $2$-sphere $\pi$ (also called affine $2$-plane) in $\mathbb R^{2,2}$ whose associated $2$-subspace is determined by the intersection $V_S\cap \mathcal P$.
\end{Lem}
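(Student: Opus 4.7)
The approach is to use Proposition \ref{propasssubspace}(1): for any affine $2$-plane $\pi$ containing $S$, one has $V_\pi \subset V_S$, and since $\pi$ is cut out by two hyperplanes we also have $V_\pi \subset \mathcal P$. Conversely, any $2$-dimensional subspace of $V_S \cap \mathcal P$ that avoids $\mathcal P^\perp$ in a generating sense corresponds to an affine $2$-plane containing $S$. So the entire lemma is really a statement about $V_S \cap \mathcal P$ being $2$-dimensional generically, and then the identification $V_\pi = V_S \cap \mathcal P$ following automatically.

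First I would compute dimensions. Since $S$ is a conic, $\dim V_S = 3$. The hyperplane-DPC subspace $\mathcal P$ has dimension $5$ in $\mathbb R^{3,3}$. The generic intersection dimension is
\[
\dim(V_S \cap \mathcal P) \;\geq\; \dim V_S + \dim \mathcal P - \dim \mathbb R^{3,3} \;=\; 3 + 5 - 6 \;=\; 2,
\]
with equality precisely when $V_S \not\subset \mathcal P$. The word \emph{generically} in the lemma is designed to discard both the degenerate case $V_S \subset \mathcal P$ (in which $S$ lies in a hyperplane in more than one way and no canonical affine $2$-plane is singled out) and the exceptional case where $(V_S\cap\mathcal P)\cap\mathcal P^\perp\neq\{0\}$ forces every basis to contain the ``empty hypersphere" direction, so I would state the result under the assumption $V_S \not\subset \mathcal P$ and $V_S \cap \mathcal P \not\subset \mathcal P^\perp$.

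Next I would produce $\pi$ explicitly. Pick two linearly independent vectors $\mathbf s_1, \mathbf s_2 \in V_S \cap \mathcal P$ with $\mathbf s_1, \mathbf s_2 \notin \mathcal P^\perp$; this is possible by the preceding genericity assumption. By Table \ref{hpshpp} each $\mathbf s_i$ is the DPC of a hyperplane $H_i \subset \mathbb R^{2,2}$. Set
\[
\pi \;:=\; H_1 \cap H_2,
\]
which is an affine $2$-plane (equivalently, a flat $2$-sphere in the terminology of Definition \ref{mspheres}), with $V_\pi = \spn\{\mathbf s_1, \mathbf s_2\}$. Since $V_\pi$ is $2$-dimensional and contained in the $2$-dimensional space $V_S \cap \mathcal P$, we obtain $V_\pi = V_S \cap \mathcal P$. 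Because $V_\pi \subset V_S$, Proposition \ref{propasssubspace}(1) gives $S \subset \pi$.

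Finally I would address uniqueness and well-definedness. If $\pi'$ is any other affine $2$-plane containing $S$, then $V_{\pi'} \subset V_S$ (Proposition \ref{propasssubspace}(1)) and $V_{\pi'} \subset \mathcal P$ (since $\pi'$ is cut out by two hyperplanes), so $V_{\pi'} \subset V_S \cap \mathcal P$. Both spaces have dimension $2$, forcing $V_{\pi'} = V_S \cap \mathcal P = V_\pi$, and by the bijective correspondence between $m$-spheres and their associated subspaces (Proposition \ref{welldefasssubspace} and its Corollary), $\pi' = \pi$. This also confirms that the construction of $\pi$ is independent of the choice of basis $\{\mathbf s_1, \mathbf s_2\}$. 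The main obstacle I anticipate is not the linear-algebraic argument itself but rather being precise about what \emph{generically} means — which of the singular strata ($V_S \subset \mathcal P$; $V_S\cap\mathcal P$ meeting $\mathcal P^\perp$; or $V_S\cap\mathcal P$ itself being totally null so that $\pi$ is degenerate as a submanifold) are excluded — and this is best handled by stating the excluded locus explicitly rather than waving at it.
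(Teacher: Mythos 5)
Your proposal is correct and follows essentially the same route as the paper: intersect the $3$-subspace $V_S$ with the $5$-subspace $\mathcal P$ to get (generically) a $2$-subspace, read it off via Table \ref{hpshpp} as the associated subspace of an intersection of two hyperplanes, and conclude $S\subset\pi$ from the reverse inclusion of associated subspaces in Proposition \ref{propasssubspace}. The only difference is that you make explicit the strata excluded by ``generically'' ($V_S\subset\mathcal P$ and $\mathcal P^\perp\subset V_S\cap\mathcal P$) and add the uniqueness argument, both of which the paper leaves implicit.
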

\begin{proof}
The associated indefinite $3$-subspace $V_S$ and the degenerate $5$-subspace $\mathcal P$ defined in equation (\ref{Psubspace}) intersect on a $2$-subspace of $\mathbb R^{3,3}$. The subspace $V_S\cap \mathcal P$ can be interpreted as the associated subspace $V_\pi$ of a $2$-sphere $\pi$ in $\mathbb R^{2,2}$. According to Table \ref{hpshpp} the $2$-sphere $\pi$ can be put as the intersection of two hyperplanes in $\mathbb R^{2,2}$, and hence $\pi$ is a \emph{flat} $2$-sphere, or an affine $2$-plane in $\mathbb R^{2,2}$. It follows by Proposition \ref{propasssubspace} that $S\subset \pi$ as the associated subspaces satisfy the reverse inclusion $V_S\cap\mathcal P\subset V_S$.
\end{proof}
\vspace{0.1in}

\begin{Lem} \label{sgn2plane} An affine $2$-plane in $\mathbb R^{2,2}$ is definite/indefinite/degenerate if and only if its associated subspace is definite/indefinite/degenerate in $\mathbb R^{3,3}$.
\end{Lem}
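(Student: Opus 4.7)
The plan is to identify the associated subspace $V_\pi$ with a concrete linear $2$-plane inside $\mathbb R^{2,2}$, at which point the lemma reduces to a standard fact about orthogonal complements in the neutral metric.

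First, I would express $\pi$ as the intersection of two linearly independent hyperplanes $H_i:\langle\underline a_i,\underline x\rangle=b_i$ for $i=1,2$. By Example \ref{hypplane} the diagonal polyspherical coordinates of $H_i$ are $\mathbf s_i=(-b_i,\underline a_i,b_i)$, and by Proposition \ref{welldefasssubspace} the associated subspace $V_\pi=\spn\{\mathbf s_1,\mathbf s_2\}$ is independent of the choice of defining hyperplanes. In particular $V_\pi\subset \mathcal P$, which is consistent with Table \ref{hpshpp}.

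Second, I would compute the Gram matrix of $\{\mathbf s_1,\mathbf s_2\}$ under the bilinear form in (\ref{threethree}). A direct cancellation gives
\[
(\mathbf s_i,\mathbf s_j) = (-b_i)(-b_j)+\langle\underline a_i,\underline a_j\rangle - b_i b_j = \langle\underline a_i,\underline a_j\rangle,
\]
so $V_\pi$ is linearly isometric to the linear $2$-plane $W:=\spn\{\underline a_1,\underline a_2\}\subset\mathbb R^{2,2}$, and in particular shares its metric type.

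Third, I would translate the type of $W$ into the type of $\pi$. The direction subspace of the affine plane $\pi$ is $D=W^\perp\subset\mathbb R^{2,2}$, and translation invariance of $g$ ensures that the induced metric type of $\pi$ coincides with that of $D$. It remains to observe that in $\mathbb R^{2,2}$ a $2$-plane $W$ and its orthogonal complement $W^\perp$ always have matching type: if $W$ is non-degenerate of signature $(p,q)$ with $p+q=2$, then $W^\perp$ has signature $(2-p,2-q)$, which is definite iff $W$ is definite and indefinite iff $W$ is indefinite; while if $W$ is degenerate, any nonzero vector in $W\cap W^\perp$ lies simultaneously in $W^\perp$ and in $(W^\perp)^\perp=W$, forcing $W^\perp$ to be degenerate as well. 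Chaining the signature-preserving identification $V_\pi\cong W$ with $D=W^\perp$ and with $\pi$ carrying the type of its direction subspace $D$, we obtain the equivalence.

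The mildest obstacle I anticipate is bookkeeping in the degenerate case, where one must be careful not to conflate partially null and totally null $2$-planes. Since the argument only uses the non-triviality of $W\cap W^\perp$, the symmetric double-orthogonal reasoning above handles all subcases uniformly.
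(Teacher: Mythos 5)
Your argument is correct. Note that the paper states Lemma \ref{sgn2plane} without proof, so there is nothing to compare against; your write-up supplies exactly the details the authors leave implicit, and it is the natural argument. The computation $(\mathbf s_i,\mathbf s_j)=b_ib_j+\langle\underline a_i,\underline a_j\rangle-b_ib_j=\langle\underline a_i,\underline a_j\rangle$ correctly identifies $V_\pi$ isometrically with the span $W$ of the normals $\underline a_1,\underline a_2$ (these are linearly independent since $\pi$ is a genuine $2$-plane, so the correspondence $\mathbf s_i\mapsto\underline a_i$ is a linear isometry onto $W$), and the passage to the direction space $D=W^\perp$ of $\pi$, together with the matching of metric types between $W$ and $W^\perp$ in $\mathbb R^{2,2}$ --- signature complementation $(p,q)\mapsto(2-p,2-q)$ in the non-degenerate case, and the common radical $W\cap W^\perp=W^\perp\cap(W^\perp)^\perp$ in the degenerate case --- is sound and covers all three alternatives of the statement.
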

\vspace{0.1in}

\subsection{Conjugate conics in $\mathbb R^{2,2}$}
\vspace{0.1in}

\begin{Def} \label{def:ndccs}
A pair of {\it non-degenerate conjugate conics} $S,S^\perp$ in $\mathbb R^{2,2}$ is the image of the standard circles $S_0,S_0^\perp$ under a conformal map of $\mathbb R^{2,2}$. 
\end{Def}
\vspace{0.1in}

Since conformal maps preserve hyperspheres,  non-degenerate conjugate conics as introduced in Definition \ref{def:ndccs} are always pairs of \emph{conics}, in the sense of Definition \ref{mspheres}.
\vspace{0.1in}

\begin{Prop} \label{ndccs2} 
Two $1$-spheres $S,S^\perp$ are a pair of \emph{non-degenerate conjugate conics} if and only if their corresponding $3$-subspaces $V,V^\perp$ are indefinite and complementary in $\mathbb R^{3,3}$.
\end{Prop}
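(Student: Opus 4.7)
The plan is to translate the question into the six-dimensional picture and exploit Proposition~\ref{O33} together with Witt's extension theorem for the neutral form on $\mathbb R^{3,3}$.

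For the forward direction, suppose $S, S^\perp$ are non-degenerate conjugate conics, so there is a conformal map $f$ of $\mathbb R^{2,2}$ with $f(S_0) = S$ and $f(S_0^\perp) = S^\perp$. By Proposition~\ref{O33} there is a linear map $\Lambda \in O^\pm(3,3)$ inducing $f$. Since $\Lambda$ permutes diagonal polyspherical coordinates of hyperspheres and preserves point-hypersphere incidence (Proposition~\ref{dpstopoints}), the induced action on subspaces sends the associated subspaces $V_0, V_0^\perp$ of $S_0, S_0^\perp$ to the associated subspaces $V, V^\perp$ of $S, S^\perp$. Because $\Lambda$ preserves the neutral form up to sign, it preserves the class of indefinite subspaces and takes metric orthogonal complements to metric orthogonal complements. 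Example~\ref{S0V0} exhibits $V_0, V_0^\perp$ as complementary indefinite $3$-subspaces, so the same holds for $V, V^\perp$.

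For the converse, let $V, V^\perp$ be two indefinite complementary $3$-subspaces of $\mathbb R^{3,3}$. The strategy is to build $\Lambda \in O^\pm(3,3)$ with $\Lambda(V_0) = V$ and $\Lambda(V_0^\perp) = V^\perp$; once this is done, Proposition~\ref{O33} gives a conformal $f$ with $f(S_0) = S$ and $f(S_0^\perp) = S^\perp$, identifying $S, S^\perp$ as non-degenerate conjugate conics. In $\mathbb R^{3,3}$ the only possible signatures of an indefinite $3$-subspace are $(+,-,-)$ and $(+,+,-)$, and these are exactly the signatures of $V_0$ and $V_0^\perp$. After possibly composing with an involution from $O^-(3,3)$ that flips the sign of the form (thereby swapping the two signature types), one may assume $V$ has the same signature as $V_0$. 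Witt's extension theorem then produces an isometry of $\mathbb R^{3,3}$ sending $V_0$ to $V$, and because an isometry automatically sends metric orthogonal complements to metric orthogonal complements, it sends $V_0^\perp$ to $V^\perp$.

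The main obstacle is the reverse direction, specifically the signature dichotomy: one must carry along the whole pair $(V_0, V_0^\perp) \to (V, V^\perp)$ simultaneously, which forces use of the full group $O^\pm(3,3)$ rather than $O(3,3)$ alone, and one must appeal to the converse part of the correspondence in Proposition~\ref{O33} to conclude that every $\Lambda \in O^\pm(3,3)$ genuinely arises from a conformal map of $\mathbb R^{2,2}$.
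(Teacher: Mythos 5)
Your proposal is correct and follows essentially the same route as the paper: the forward direction transports $V_0,V_0^\perp$ by the $\Lambda\in O^\pm(3,3)$ of Proposition~\ref{O33}, and the converse rests on the transitivity of $O^\pm(3,3)$ on indefinite $3$-subspaces, which the paper simply asserts and you justify via Witt's extension theorem together with the sign-reversing elements of $O^-(3,3)$ to handle the two possible signatures $(+,+,-)$ and $(+,-,-)$. Your closing remark that one must also invoke the converse of the correspondence in Proposition~\ref{O33} (that elements of $O^\pm(3,3)$ actually arise from conformal maps) correctly identifies the one step the paper leaves implicit.
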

\begin{proof} 
It can be checked from Example \ref{S0V0} that the associated $3$-subspaces $V_0, V_0^\perp$ of the standard conjugate circles $S_0,S_0^\perp$ are indefinite and complementary in $\mathbb R^{3,3}$. Let $S,S^\perp$ be obtained from $S_0,S_0^\perp$ by a conformal map $f$ of $\mathbb R^{2,2}$, and let $V,V^\perp$ be the respective associated $3$-subspaces. The associated linear map $\Lambda\in O^\pm(3,3)$ maps $V_0$ to $V$, and $V_0^\perp$ to $V^\perp$. It follows that $V,V^\perp$ will also be indefinite and complementary in $\mathbb R^{3,3}$.
The converse follows by the transitivity of $O^\pm(3,3)$ on indefinite $3$-subspaces.
\end{proof}
\vspace{0.1in}

We are still left to prove the classification result for non-degenerate conjugate conics in terms of their containing planes, which is the content of Theorem \ref{equivdefs}. According to Lemma \ref{vcapp}, the unique plane containing a conic comes described by the intersection of the associated $3$-subspace relative to the conic with $\mathcal P$. The following is a preliminary technical result:
\vspace{0.1in}

\begin{Lem} \label{technicallem} Let $\mathcal P$ be the subspace of $\mathbb R^{3,3}$ given by $s_0+s_5=0$ and $V$ an indefinite $3$-subspace. Then there are the following possibilities for the intersection between $\mathcal P$ and the complementary indefinite subspaces $V,V^\perp$:
\begin{itemize}
\item {\bf Case 0}. $\mathcal P^\perp \subset V,V^\perp$ and $V,V^\perp \subset \mathcal P$.
\item {\bf Case 1}. $V\subset \mathcal P$ and $\mathcal P^\perp \subset V^\perp \cap \mathcal P$, or $V^\perp\subset \mathcal P$ and $\mathcal P^\perp \subset V \cap \mathcal P$
\item {\bf Case 2}. Both $V,V^\perp$ intersect $\mathcal P$ in a $2$-subspace, none of which contains $\mathcal P^\perp$. Then $V\cap \mathcal P$ is a definite/indefinite/degenerate $2$-subspace if and only if  $V^\perp \cap \mathcal P$ is a definite/indefinite/degenerate $2$-subspace.
\end{itemize}\end{Lem}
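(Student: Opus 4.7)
The plan is to exploit the duality between containments in $\mathcal P$ and in $\mathcal P^\perp$, and then to reduce the signature claim in Case 2 to the elementary fact that, in a neutral $4$-space, a $2$-plane and its orthogonal complement share the same metric type.

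First I would use that the form on $\mathbb R^{3,3}$ is non-degenerate and $V$ is indefinite (hence non-degenerate with $V \oplus V^\perp = \mathbb R^{3,3}$), to obtain the duality $V \subset \mathcal P$ if and only if $\mathcal P^\perp \subset V^\perp$ (and symmetrically). Since $V + V^\perp = \mathbb R^{3,3}$ has dimension $6 > 5 = \dim \mathcal P$, it is impossible for both $V$ and $V^\perp$ to lie in $\mathcal P$, which rules out Case 0. A Grassmann count forces $\dim(V \cap \mathcal P), \dim(V^\perp \cap \mathcal P) \in \{2, 3\}$. Case 1 is then immediate: if $V \subset \mathcal P$ then by duality $\mathcal P^\perp \subset V^\perp$, and since $\mathcal P^\perp \subset \mathcal P$ (as $\mathcal P^\perp$ is the radical of the form restricted to $\mathcal P$) one concludes $\mathcal P^\perp \subset V^\perp \cap \mathcal P$.

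For Case 2, write $W := V \cap \mathcal P$ and $W' := V^\perp \cap \mathcal P$, both of dimension $2$. The duality forces $\mathcal P^\perp \not\subset V, V^\perp$, and since $W \subset V$ and $W' \subset V^\perp$ this gives $\mathcal P^\perp \not\subset W, W'$. I would then pass to the quotient $\mathcal P / \mathcal P^\perp$: a short calculation in coordinates $(s_0, s_1, s_2, s_3, s_4, -s_0)$ on $\mathcal P$ reduces the restricted form to $s_1^2 + s_2^2 - s_3^2 - s_4^2$, showing $\mathcal P$ has signature $(2,2)$ with $1$-dimensional radical $\mathcal P^\perp$, so $\mathcal P / \mathcal P^\perp$ is a non-degenerate $4$-space isometric to $\mathbb R^{2,2}$. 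Because $\mathcal P^\perp$ is $1$-dimensional and is not contained in $W$ or $W'$, the intersections $W \cap \mathcal P^\perp$ and $W' \cap \mathcal P^\perp$ both vanish, so the quotient projection restricts to isometric isomorphisms $W \to \bar W$ and $W' \to \bar W'$ onto their images in $\mathcal P / \mathcal P^\perp$.

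The final step is to identify $\bar W'$ with $\bar W^\perp$ inside $\mathcal P / \mathcal P^\perp$: from $W \subset V$ and $W' \subset V^\perp$ one has $W \perp W'$ in $\mathbb R^{3,3}$, hence $\bar W \perp \bar W'$ in the quotient; a dimension count (both are $2$-dimensional in a non-degenerate $4$-space) then forces $\bar W^\perp = \bar W'$. The signature claim reduces to the standard fact that in a signature-$(2,2)$ space a $2$-plane $U$ and its orthogonal complement $U^\perp$ share the same metric type, which I would verify by a case split on signature $(2,0), (0,2), (1,1),$ or degenerate, using in particular that $\mathrm{rad}(U^\perp) = U \cap U^\perp = \mathrm{rad}(U)$ so degeneracy is automatically simultaneous. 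I expect the main obstacle to be the careful bookkeeping around the quotient: namely, verifying that the passage $W \to \bar W$ is a genuine isometry and not merely an injection, which rests squarely on the Case 2 hypothesis $\mathcal P^\perp \not\subset W$ together with $\dim \mathcal P^\perp = 1$.
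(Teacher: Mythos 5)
Your proof is correct, and in Case 2 it takes a genuinely different route from the paper. The paper decomposes $\mathcal P$ as the orthogonal direct sum $(V\cap\mathcal P)\oplus(V^\perp\cap\mathcal P)\oplus\mathcal P^\perp$ and reads off $\sgn(V\cap\mathcal P)+\sgn(V^\perp\cap\mathcal P)=0$ from additivity of signature; you instead quotient by the radical, $\mathcal P/\mathcal P^\perp\cong\mathbb R^{2,2}$, show the images $\bar W,\bar W'$ of the two intersections are isometric copies of them and are mutual orthogonal complements there, and invoke the elementary fact that a $2$-plane and its orthogonal complement in a neutral $4$-space share a metric type. Your route is the more robust of the two: when $V\cap\mathcal P$ and $V^\perp\cap\mathcal P$ are both degenerate, the generator of $\mathcal P^\perp$ can split as a sum of vectors in the two radicals and hence lie in $(V\cap\mathcal P)+(V^\perp\cap\mathcal P)$, in which case the paper's sum is neither direct nor equal to $\mathcal P$ and the signature identity requires a separate argument (degeneracy of one intersection forcing degeneracy of the other); your quotient argument treats all three metric types uniformly, using only the Case 2 hypothesis $\mathcal P^\perp\not\subset W,W'$ to guarantee that projection to the quotient is an isometry on $W$ and $W'$. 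Two smaller points also in your favour: you note that Case 0 is vacuous, since complementarity gives $V+V^\perp=\mathbb R^{3,3}$, which cannot sit inside the $5$-dimensional $\mathcal P$ (equivalently $\mathcal P^\perp\subset V\cap V^\perp=\{0\}$ is absurd), whereas the paper retains Case 0 and only discards it later in the proof of Theorem \ref{equivdefs}; and your duality $V\subset\mathcal P\Leftrightarrow\mathcal P^\perp\subset V^\perp$ together with the Grassmann count $\dim(V\cap\mathcal P)\in\{2,3\}$ cleanly forces the trichotomy, matching the paper's opening reductions.
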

\begin{proof} 
Assume first $V\subset \mathcal P$. It follows that $\mathcal P^\perp \subset V^\perp$.  Assuming $\mathcal P^\perp \subset V$ one gets Case 0, as $V^\perp \subset \mathcal P^{\perp\perp}=\mathcal P$. If, on the other hand, $\mathcal P^\perp \not\subset V$ one has $V^\perp\not\subset \mathcal P$ and it follows that $V^\perp$ intersects $\mathcal P$ on a $2$-subspace containing $\mathcal P^\perp$, which corresponds to Case 1.

Assume now $\mathcal P^\perp \not \subset V\not \subset \mathcal P$.  It follows that $\mathcal P^\perp \not \subset V^\perp\not \subset \mathcal P$. Hence both $V, V^\perp$ intersect $\mathcal P$ on respective $2$-subspaces none of which contains $\mathcal P^\perp$. The subspace $\mathcal P$ then can be written as the direct sum $\mathcal P=(V\cap \mathcal P) \oplus (V\cap \mathcal P^\perp) \oplus \mathcal P^\perp$. It follows that the signature $\sgn \mathcal P$ can be decomposed as
$$
\sgn \mathcal P = \sgn(V\cap \mathcal P) + \sgn(V^\perp\cap \mathcal P) + \sgn \mathcal P^\perp, 
$$
where $\sgn$ takes the difference between the number of positive and negative eigenvalues of the induced metric in $\mathbb R^{3,3}$. 
It follows that:
\begin{equation}\label{sgnrel}
\sgn(V\cap \mathcal P) + \sgn(V^\perp\cap \mathcal P) = 0.
\end{equation}
From equation (\ref{sgnrel}) one can see that $V\cap \mathcal P$ is definite/indefinite/degenerate if and only if $V^\perp \cap \mathcal P$ is definite/indefinite/degenerate, which makes up Case 2.
\end{proof}
\vspace{0.1in}

\noindent{\it Proof of Theorem \ref{equivdefs}}

A $3$-subspace $V$ containing  $\mathcal P^\perp$ does not correspond to any conic in $\mathbb R^{2,2}$, hence Case 0 from Lemma \ref{technicallem} is ruled out. We look at the remaining cases:
\begin{itemize}
\item[{\bf Case 1}.] Assume  $\mathcal P^\perp \not\subset V\subset \mathcal P$ and $\mathcal P^\perp \subset V^\perp$. Then $V=V_S$ is the associated $3$-space of a conic $S$ which can be obtained as the intersection of three hyperplanes in $\mathbb R^{2,2}$ by Table \ref{hpshpp}. Thus, $S$ is a line and $S^\perp=$\O.

\item [{\bf Case 2}.] Assume both $V\cap \mathcal P$ and $V^\perp\cap \mathcal P$ are $2$-dimensional subspaces. Let $\mathbf s, \mathbf s', \mathbf s''$ and $\mathbf s_\perp,  \mathbf s'_\perp, \mathbf s''_\perp$ be two bases of $V$ and $V^\perp$, respectively, so that $\mathbf s, \mathbf s'\in V\cap \mathcal P$ and $\mathbf s_\perp, \mathbf s'_\perp\in V^\perp\cap \mathcal P$. 

The vectors $\mathbf s$ and $\mathbf s'$ can be written as $\mathbf s=(-b, \underline a, b)$ and $\mathbf s'=(-b', \underline a', b')$ for some $\underline a, \underline a'\in\mathbb R^{2,2}$ and $b,b'\in\mathbb R$ and so they represent two hyperplanes $H: \langle \underline a, \underline x\rangle = b$ and $H': \langle \underline a', \underline x\rangle = b'$ of $\mathbb R^{2,2}$. Similarly, the vectors $\mathbf s_\perp=(-\beta, \underline \alpha, \beta)$ and $\mathbf s_\perp'=(-\beta', \underline \alpha', \beta')$ represent two hyperplanes $H_\perp: \langle \underline \alpha, \underline x\rangle = \beta$ and $H_\perp': \langle \underline \alpha', \underline x\rangle = \beta'$ of $\mathbb R^{2,2}$. Define the $2$-planes in $\mathbb R^{2,2}$
$\pi:=H\cap H'$ and $\pi^\perp:=H_\perp\cap H_\perp'$. The metric type of the affine planes $\pi, \pi^\perp$ is given in each case by Lemma \ref{sgn2plane}.

Finally, as the vectors $\mathbf s'', \mathbf s''_\perp$ are not in $\mathcal P$, they are the diagonal polyspherical coordinates of two proper hyperspheres which we will assume to be $H_{\underline p, \pm r^2}$ and $H_{\underline {\tilde p}, \pm \tilde r^2}$, respectively.

\begin{enumerate}[label=(\roman*)]
\item If both $V\cap \mathcal P$ and $V^\perp\cap \mathcal P$ are definite then the basis formed by $\mathbf s, \mathbf s',\mathbf s'', \mathbf s_\perp, \mathbf s'_\perp,\mathbf s''_\perp$ can be taken to be an orthonormal basis of $\mathbb R^{3,3}$. 






Applying orthogonality conditions from Proposition \ref{ortho} to $\mathbf s, \mathbf s'$ and $\mathbf s_\perp, \mathbf s'_\perp$ we get that $\pi$ and $\pi^\perp$ are orthogonal in $\mathbb R^{2,2}$ and hence must intersect at a point. Using orthogonality between $\mathbf s''$ and $\mathbf s_\perp, \mathbf s'_\perp$ one gets $\underline p\in \pi\cap\pi^\perp$, and similarly for $\tilde{\underline p}\in\pi\cap\pi^\perp$. It follows that $\underline p=\tilde{\underline p}$. Orthogonality between $\mathbf s''$ and $\mathbf s_\perp''$ will give $\pm r^2 = \mp \tilde r^2$. Finally, the radius-square cannot be zero, as the vector $\mathbf s''$ was taken to be non-null.

\item If $V\cap \mathcal P$ and $V^\perp\cap \mathcal P$ are indefinite a similar argument follows. 

\item If $V\cap \mathcal P$ and $V^\perp\cap \mathcal P$ are degenerate one can choose $\mathbf s$ to be null, $\mathbf s'$ to be non-null and orthogonal to $\mathbf s$, and $\mathbf s''$ to be null and not orthogonal to $\mathbf s$ (apply a similar choice for $\mathbf s_\perp, \mathbf s_\perp', \mathbf s_\perp''$). As $\mathbf s''$ and $\mathbf s''_\perp$ are null, they are the diagonal polyspherical coordinates of isotropic cones $C_{\underline p}$ and $C_{\underline{\tilde p}}$, respectively.

We now see that the degenerate affine planes $\pi$ and $\pi^\perp$ do not intersect. The vectors $\underline a, \underline \alpha$ are null and orthogonal in $\mathbb R^{2,2}$. However, there exist non-null orthogonal vectors to both of them (e.g $\underline a'$) and so $\underline a, \underline \alpha$ must be linearly dependent (two orthogonal null vectors in an $\mathbb R^{2,1}$ are linearly dependent). As  diagonal polyspherical coordinates are defined up to scalar one can assume $s_\perp = (-\beta, \underline a, \beta)$ and it follows that $b\neq \beta$ as otherwise $V$ and $V^\perp$ would have non-zero intersection. Hence the affine planes $\pi, \pi^\perp$ do not intersect in $\mathbb R^{2,2}$.  

From Proposition \ref{ortho}, orthogonality between $\mathbf s''$ and $\mathbf s_\perp, \mathbf s'_\perp$ implies $\underline p\in \pi^\perp$, and by a similar argument $\tilde{\underline p} \in \pi$. Finally, as $\mathbf s''$ and $\mathbf s_\perp''$ are orthogonal, the points $\underline p$ and $\tilde{\underline p}$ will be null-separated.
\end{enumerate}
\end{itemize}
\qed
\vspace{0.1in}

\section{Conjugate conics as lines of a doubly ruled surfaces in $\mathbb R^3$}

In this section, the space $\mathbb R^{2,2}$ is interpreted as a local chart of the space of lines in $3$-space. For a line having  Plücker coordinates $p_1,p_2,p_3,q_1,q_2,q_3$ define the following coordinates
\begin{equation}\label{johnscoords}
\underline x = \left( \frac{p_2+q_2}{q_3} , \frac{-p_1-q_1}{q_3}, 
\frac{p_2-q_2}{q_3},
\frac{-p_1+q_1}{q_3}
\right)\in\mathbb R^{2,2}.
\end{equation}

Observe that both horizontal lines (those for which $q_3=0$) and improper lines are not covered by these lines coordinates, i.e. they appear to be \emph{at infinity} in $\mathbb R^{2,2}$. The following important result explains the connection between projective line geometry and the geometry of the pseudo-Euclidean space $\mathbb R^{2,2}$:

\vspace{0.1in}
\begin{Prop} 
Two lines in $3$-space intersect or are parallel if and only if their corresponding vectors $\underline x, \underline x'\in\mathbb R^{2,2}$ defined as in equation (\ref{johnscoords}) are null-separated (when defined), i.e. $||\underline x - x'||^2 = 0$. 
\end{Prop}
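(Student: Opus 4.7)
The plan is to reduce the proposition to a direct computation showing that $||\underline x - \underline x'||^2$ equals, up to a nowhere-zero factor on the chart's domain, the classical Plücker bilinear form $p\cdot q' + p'\cdot q$ of the two lines. Once that identity is established the proposition is immediate, since the vanishing of $p\cdot q' + p'\cdot q$ is the standard projective criterion for two lines in $3$-space to be coplanar, i.e.\ either to meet at an affine point or to be parallel (meeting at infinity).

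First I would expand
\[
||\underline x - \underline x'||^2 = ||\underline x||^2 - 2\langle \underline x, \underline x'\rangle + ||\underline x'||^2
\]
and treat the three pieces separately. The coordinates in (\ref{johnscoords}) are grouped so that the elementary identity $(a+b)^2 - (a-b)^2 = 4ab$ collapses the self-norm to $||\underline x||^2 = 4(p_1q_1 + p_2q_2)/q_3^2$; invoking the single-line Plücker relation $p\cdot q = 0$ rewrites this compactly as $-4p_3/q_3$, and symmetrically for $||\underline x'||^2$. The cross term $\langle \underline x, \underline x'\rangle$ is handled by the analogous bilinear identity $(a+b)(a'+b') - (a-b)(a'-b') = 2(ab' + a'b)$, yielding $2(p_1q_1' + p_1'q_1 + p_2q_2' + p_2'q_2)/(q_3 q_3')$ without invoking any Plücker relation.

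Summing the three contributions over the common denominator $q_3 q_3'$, the Plücker-reduced self-norms contribute $-4p_3 q_3' - 4p_3' q_3$, which combine with the cross term to assemble precisely the six-term expression $-4(p\cdot q' + p'\cdot q)$. This produces the target identity
\[
||\underline x - \underline x'||^2 = -\frac{4}{q_3 q_3'}\bigl(p\cdot q' + p'\cdot q\bigr).
\]
Because the hypothesis \emph{when defined} is exactly $q_3 q_3' \neq 0$, the left-hand side vanishes if and only if the right-hand side does, and the classical coplanarity criterion finishes the equivalence.

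The main obstacle is purely bookkeeping rather than conceptual: one has to deploy the single-line Plücker identities on the self-norms while leaving the mixed cross term untouched, and keep track of the signs produced by the $(2,2)$ signature. The \emph{when defined} caveat is not cosmetic, since horizontal lines $(q_3=0)$ correspond to points at infinity of the chart (\ref{johnscoords}); the proposition inherently lives over the affine patch on which both lines admit John-type coordinates, and parallel non-horizontal lines are correctly recovered as coplanar pairs meeting at an ideal point.
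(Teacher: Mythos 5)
Your computation is correct, and it fills a gap rather than duplicating anything: the paper states this proposition without proof, treating it as a known fact from line geometry going back to John. I checked the identity you derive. With $x_1,x_3$ (resp.\ $x_2,x_4$) paired, the signature $(2,2)$ collapses the self-norm to $||\underline x||^2 = 4(p_1q_1+p_2q_2)/q_3^2 = -4p_3/q_3$ via the Pl\"ucker relation $p\cdot q=0$, the cross term gives $2(p_1q_1'+p_1'q_1+p_2q_2'+p_2'q_2)/(q_3q_3')$, and assembling $||\underline x||^2 - 2\langle\underline x,\underline x'\rangle + ||\underline x'||^2$ over the common denominator does yield
\[
||\underline x - \underline x'||^2 = -\tfrac{4}{q_3 q_3'}\left(p\cdot q' + p'\cdot q\right),
\]
so null-separation is equivalent to the vanishing of the mutual moment, which is the classical coplanarity (intersect-or-parallel) criterion. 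Your handling of the \emph{when defined} clause is also right: $q_3$ is the third component of the direction, so the identity is valid exactly on the chart excluding horizontal lines, and the nonvanishing of $q_3q_3'$ makes the prefactor harmless. The only convention-dependent point worth a sentence in a written-up version is which of $p,q$ plays the role of direction versus moment; since both the Pl\"ucker relation and the incidence form are symmetric under exchanging those roles simultaneously for both lines, your argument is insensitive to that choice.
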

\vspace{0.1in}

In this section, the null-separation properties between the points in non-degenerate conjugate conics are used to show their relation to the two families of generating lines of a doubly ruled surface in $3$-space.
\vspace{0.1in}

\subsection{Null-separation property of non-degenerate conjugate conics}
\vspace{0.1in}
\begin{Lem} \label{threecones}
Three isotropic cones in $\mathbb R^{2,2}$ intersect on a non-degenerate conic if and only if the centers are not null-separated.
\end{Lem}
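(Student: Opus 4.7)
My plan is to convert the geometric statement into a signature computation in $\mathbb R^{3,3}$ via the diagonal polyspherical coordinates of the three isotropic cones. Writing $\mathbf s_i := \mathbf s_{\underline p_i}$ for the DPC of $C_{\underline p_i}$, each $\mathbf s_i$ is a null vector of $\mathbb R^{3,3}$, and by Definition \ref{mspheres} the intersection $S = C_{\underline p_1}\cap C_{\underline p_2}\cap C_{\underline p_3}$ is a $1$-sphere with associated subspace
\[
V \;=\; \spn\{\mathbf s_1,\ \mathbf s_2,\ \mathbf s_3\}.
\]
Being a \emph{non-degenerate} conic means, by definition, that $V$ is indefinite in $\mathbb R^{3,3}$, so the entire lemma reduces to reading the signature of $V$ off its Gram matrix $G = \bigl((\mathbf s_i,\mathbf s_j)\bigr)_{i,j=1}^3$.

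By Lemma \ref{distvsprod}, $\|\underline p_i-\underline p_j\|^2 = -2(\mathbf s_i,\mathbf s_j)$, so $G$ has vanishing diagonal (each $\mathbf s_i$ is null) while its off-diagonal entries are, up to the factor $-\tfrac12$, exactly the pairwise squared distances of the centres. Reading \emph{``the centres are not null-separated''} as \emph{``at least one pair of centres is not null-separated''}, the lemma becomes the purely linear-algebraic assertion that a symmetric $3\times 3$ matrix with zero diagonal is indefinite if and only if some off-diagonal entry is non-zero.

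The two implications are then immediate. If every pair of centres is null-separated, $G\equiv 0$, so the bilinear form restricts to zero on $V$; thus $V$ is totally isotropic and in particular degenerate, and $S$ fails to be non-degenerate. Conversely, if some entry $(\mathbf s_i,\mathbf s_j) = a \neq 0$, then $\spn\{\mathbf s_i,\mathbf s_j\}$ carries the restricted Gram matrix $\bigl(\begin{smallmatrix}0 & a\\ a & 0\end{smallmatrix}\bigr)$ whose eigenvalues are $\pm a$, i.e. a Lorentzian plane of signature $(1,1)$. Because $V$ contains a subspace of signature $(1,1)$, it must have at least one positive and at least one negative direction, hence is indefinite and $S$ is non-degenerate.

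The only technicality I anticipate is the implicit hypothesis that $\mathbf s_1,\mathbf s_2,\mathbf s_3$ are linearly independent, so that $V$ really is $3$-dimensional and the triple intersection really is a conic in the sense of Definition \ref{mspheres}. This is painless: the $\mathbf s_i$ all lie in the affine slice $Q_1 = Q\cap\{s_0+s_5=1\}$, so any non-trivial linear relation $\sum\alpha_i\mathbf s_i = 0$ forces $\sum\alpha_i = 0$, and when combined with the non-null-separation of at least one pair of centres (so the corresponding $(\mathbf s_i,\mathbf s_j)\neq 0$) this yields a quick contradiction; the degenerate case is precisely the one excluded by the forward direction above.
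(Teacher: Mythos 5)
There is a genuine gap, and it stems from your reading of the hypothesis. In the paper ``the centers are not null-separated'' means \emph{pairwise}: all three products $(\mathbf s_1,\mathbf s_2)$, $(\mathbf s_1,\mathbf s_3)$, $(\mathbf s_2,\mathbf s_3)$ are non-zero. Under your weaker reading (``at least one pair is not null-separated'') the claimed linear-algebra equivalence is false for the notion of \emph{indefinite} that the paper actually uses. Throughout the paper, definite/indefinite/degenerate is a trichotomy: an indefinite subspace is in particular \emph{non-degenerate}, and the definition of a non-degenerate conic, as well as Proposition \ref{ndccs2} (which needs $V$ and $V^\perp$ to be complementary, impossible if $V$ is degenerate), depend on this. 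Take centres with, say, $(\mathbf s_1,\mathbf s_2)=a\neq 0$ but $(\mathbf s_1,\mathbf s_3)=(\mathbf s_2,\mathbf s_3)=0$. The Gram matrix is
\[
G=\begin{pmatrix}0 & a & 0\\ a & 0 & 0\\ 0 & 0 & 0\end{pmatrix},
\]
which has $\det G=0$: the restriction of the form to $V$ has a kernel (containing $\mathbf s_3$), so $V$ is \emph{degenerate}, not indefinite, and the intersection is not a non-degenerate conic. Your converse argument only shows that $V$ contains a Lorentzian plane, i.e.\ that the restricted form has mixed signature; it does not rule out a null direction orthogonal to all of $V$, which is exactly what disqualifies the subspace. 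The correct criterion is the one the paper uses: $\Tr G=0$ always, and $\det G = 2(\mathbf s_1,\mathbf s_2)(\mathbf s_2,\mathbf s_3)(\mathbf s_1,\mathbf s_3)$, so $V$ is indefinite if and only if \emph{all three} pairwise products are non-zero --- trace zero plus non-vanishing determinant forces one sign of each and no zero eigenvalue. Your forward direction (all pairs null-separated $\Rightarrow$ $G\equiv 0$ $\Rightarrow$ totally isotropic) is fine but covers only one of the degenerate configurations; the mixed cases above are the ones your argument misses. The linear-independence remark, incidentally, becomes automatic once $\det G\neq 0$ is in hand, since a non-singular Gram matrix forces independence.
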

\begin{proof} Let $\underline a, \underline a', \underline a''\in\mathbb R^{2,2}$ be three non-null-separated points. Let $\mathbf s, \mathbf s',\mathbf s'' \in\mathbb R^{3,3}$ be the diagonal polyspherical coordinates of the isotropic cones $C_{\underline a}, C_{\underline a'}$ and $C_{\underline a''}$, respectively. From the non-null-separation of the centers it follows from Lemma \ref{distvsprod} that $$
(\mathbf s, \mathbf s'), (\mathbf s',\mathbf s''), (\mathbf s,\mathbf s'') \neq 0.
$$
As $\mathbf s, \mathbf s',\mathbf s''$ are DPC of isotropic cones we also have the nullity condition
$$
(\mathbf s,\mathbf s)= (\mathbf s',\mathbf s')=(\mathbf s'',\mathbf s'')=0.
$$
Let $G=(\mathbf s^{i}, \mathbf s^{j})$. Then:
\begin{align*}
\Tr G & = 0 \\
\Det G & = 2 (\mathbf s, \mathbf s') (\mathbf s',\mathbf s'') (\mathbf s,\mathbf s'') \neq 0.
\end{align*}

Let $S$ be the conic determined by the intersection of the three null cones. The associated $3$-subspace $V=\spn\{\mathbf  s, \mathbf s', \mathbf s''\}$ is non-degenerate and has indefinite signature, and thus $S$ is non-degenerate. Moreover, if ones relaxes the non-null-separation of the centers, the associated $3$-subspace $V$ becomes degenerate, hence the converse also holds.
\end{proof}
\vspace{0.1in}

\begin{Prop}\label{3pointscondition} Two curves $S,S^\perp$ in $\mathbb R^{2,2}$ are non-degenerate conjugate conics if and only if for all  non-null-separated points $\underline a, \underline a', \underline a''\in S$ and  non-null-separated points $\underline b,\underline b', \underline b''\in S^\perp$ we have 
$$
S =C_{\underline b} \cap C_{\underline b'} \cap C_{\underline b''},
$$
and
$$
S^\perp = C_{\underline a} \cap  C_{\underline a'} \cap C_{\underline a''}.
$$
\end{Prop}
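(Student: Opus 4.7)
The plan is to phrase both implications entirely in polyspherical coordinates and then apply Lemma~\ref{threecones} together with the characterization of non-degenerate conjugate conics as pairs of complementary indefinite $3$-subspaces from Proposition~\ref{ndccs2}.

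For the forward direction, assume $S, S^\perp$ are non-degenerate conjugate conics with associated complementary indefinite $3$-subspaces $V, V^\perp \subset \mathbb R^{3,3}$. Pick three non-null-separated points $\underline b, \underline b', \underline b'' \in S^\perp$. By Proposition~\ref{propasssubspace}(2), their DPCs $\mathbf s_{\underline b}, \mathbf s_{\underline b'}, \mathbf s_{\underline b''}$ all lie in $(V^\perp)^\perp$, which equals $V$ because $V$ is non-degenerate (indefinite). These three vectors are null and their pairwise inner products are non-zero by Lemma~\ref{distvsprod}, so the Gram determinant computation performed inside the proof of Lemma~\ref{threecones} shows they are linearly independent; hence they form a basis of $V$. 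The corollary to Proposition~\ref{welldefasssubspace} then identifies the conic with associated subspace $V$ as the intersection $C_{\underline b} \cap C_{\underline b'} \cap C_{\underline b''}$, namely $S$. The argument that $S^\perp = C_{\underline a} \cap C_{\underline a'} \cap C_{\underline a''}$ is symmetric.

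For the converse, fix one admissible triple of non-null-separated points on each of $S, S^\perp$. The hypothesis presents both $S$ and $S^\perp$ as intersections of three isotropic cones with non-null-separated centers, so by Lemma~\ref{threecones} each is a non-degenerate conic with associated $3$-subspaces $V = \spn\{\mathbf s_{\underline b}, \mathbf s_{\underline b'}, \mathbf s_{\underline b''}\}$ and $V^\perp = \spn\{\mathbf s_{\underline a}, \mathbf s_{\underline a'}, \mathbf s_{\underline a''}\}$. For any $\underline b \in S^\perp = C_{\underline a} \cap C_{\underline a'} \cap C_{\underline a''}$, membership in each $C_{\underline a^i}$ combined with Lemma~\ref{distvsprod} gives $(\mathbf s_{\underline a^i}, \mathbf s_{\underline b}) = 0$, and the analogous relation holds for $\mathbf s_{\underline b'}, \mathbf s_{\underline b''}$; hence the two $3$-subspaces are orthogonal. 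Being $3$-dimensional, orthogonal and non-degenerate (indefinite, since each contains non-zero null vectors) they are complementary in $\mathbb R^{3,3}$, and Proposition~\ref{ndccs2} concludes that $S, S^\perp$ are non-degenerate conjugate conics.

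The main obstacle is the linear independence of three null vectors with mutually non-null pairings in $\mathbb R^{3,3}$, which is exactly the non-vanishing determinant computation already carried out in Lemma~\ref{threecones}; beyond that, the only subtlety is a quantifier check. In the forward direction one must verify that \emph{every} admissible triple recovers the full conic (not merely some triple), which is handled uniformly by the basis argument above; in the converse, a single admissible triple on each side suffices to extract the associated subspace.
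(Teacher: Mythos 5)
Your proof is correct and follows essentially the same route as the paper: both directions are translated into diagonal polyspherical coordinates, with the forward direction resting on Proposition \ref{propasssubspace} and the basis property of the associated subspace, and the converse on Lemma \ref{threecones}, Lemma \ref{distvsprod} and Proposition \ref{ndccs2}. Your explicit justification of linear independence via the Gram determinant and of complementarity of the orthogonal non-degenerate $3$-subspaces merely fills in details the paper leaves implicit.
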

\begin{proof}
Let $S,S^\perp$ be a pair of non-degenerate conjugate conics with respective associated $3$-subspaces $V,V^\perp$, and choose any three non-null-separated points $\underline a, \underline a', \underline a''\in S$. By Proposition \ref{propasssubspace} the vectors $\mathbf s_{\underline a}, \mathbf s_{\underline a'}, \mathbf s_{\underline a''}$ defined as in equation (\ref{eq:DPScone}) belong to the subspace $V^\perp$. Moreover, as the points are non-null-separated in $\mathbb R^{2,2}$, the corresponding vectors form a null basis of $V^\perp$. A point $\underline x\in\mathbb R^{2,2}$ will lie in the intersection $C_{\underline a} \cap  C_{\underline a'} \cap C_{\underline a''}$ if and only if the vector $\mathbf s_{\underline x}$ is orthogonal to $\mathbf s_{\underline a}, \mathbf s_{\underline a'}, \mathbf s_{\underline a''}$ in $\mathbb R^{3,3}$. Hence $\mathbf s_{\underline x}\in (V^\perp)^\perp=V$ and it follows that $\underline x\in S^\perp$. A similar argument shows that $S$ is obtained as the intersection of three isotropic cones centered at any non-null-separated three points in $S^\perp$. 

We now prove the converse. Let $\underline a, \underline a', \underline a''$ be non-null-separated points in the curve $S$. It follows from Lemma \ref{threecones} that $S^\perp=C_{\underline a} \cap  C_{\underline a'} \cap C_{\underline a''}$ is a non-degenerate conic with associated indefinite $3$-subspace $V^\perp = \spn\{\mathbf s_{\underline a}, \mathbf s_{\underline a'}, \mathbf s_{\underline a''}\}$. Similarly, choosing three non-null-separated points $\underline b, \underline b', \underline b''\in S^\perp$ we find that $S$ is a non-degenerate conic with associated indefinite $3$-subspace $V=\spn\{\mathbf s_{\underline b}, \mathbf s_{\underline b'}, \mathbf s_{\underline b''}\}$. As the points $\underline a, \underline a', \underline a''$ are null-separated from the points $\underline b, \underline b', \underline b''$ in $\mathbb R^{2,2}$, it follows from Lemma
\ref{distvsprod} that the vectors $\mathbf s_{\underline a}, \mathbf s_{\underline a}, \mathbf s_{\underline a}$ are pairwise orthogonal to $\mathbf s_{\underline b}, \mathbf s_{\underline b'}, \mathbf s_{\underline b''}$ in $\mathbb R^{3,3}$. Hence the spaces $V$ and $V^\perp$ are complementary in $\mathbb R^{3,3}$, and it follows by Proposition \ref{ndccs2} that $S$ and $S^\perp$ are a pair of non-degenerate conjugate conics.
\end{proof}
\vspace{0.1in}

\begin{Cor} (Null-distance property between points of non-degenerate conjugate conics). Given a pair $S,S^\perp$ of non-degenerate conjugate conics and two points $\underline a \in S$ and $\underline b \in S^\perp$ then $||\underline a-\underline b||^2=0$.
\end{Cor}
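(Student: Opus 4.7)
My plan is to deduce this corollary directly from the algebraic characterization of non-degenerate conjugate conics via their associated $3$-subspaces, which has already been set up in Proposition \ref{ndccs2} and Proposition \ref{propasssubspace}, together with the translation between null-separation and orthogonality given by Lemma \ref{distvsprod}. The statement is essentially a reinterpretation of the orthogonality of the subspaces $V, V^\perp \subset \mathbb{R}^{3,3}$, lifted to $\mathbb{R}^{2,2}$.

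Concretely, let $V = V_S$ and $V^\perp = V_{S^\perp}$ be the associated $3$-subspaces of the conics. By Proposition \ref{ndccs2} these are complementary indefinite subspaces of $\mathbb{R}^{3,3}$, so every vector in $V$ is orthogonal to every vector in $V^\perp$ with respect to the neutral form $(\cdot,\cdot)$. Now apply Proposition \ref{propasssubspace}(2): the point $\underline a\in S$ corresponds to the condition $\mathbf{s}_{\underline a}\in V^\perp$, where $\mathbf{s}_{\underline a}$ are the DPC of the isotropic cone $C_{\underline a}$ as given by equation \eqref{eq:DPScone}; similarly $\underline b\in S^\perp$ gives $\mathbf{s}_{\underline b}\in V$. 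Hence $(\mathbf{s}_{\underline a}, \mathbf{s}_{\underline b}) = 0$.

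To finish, I would invoke Lemma \ref{distvsprod}, which tells us precisely that
\[
||\underline a - \underline b||^2 = -2\,(\mathbf{s}_{\underline a}, \mathbf{s}_{\underline b}),
\]
so the orthogonality above forces $||\underline a - \underline b||^2 = 0$, as required.

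There is no real obstacle here: the work was already done in setting up the dictionary between points of $\mathbb{R}^{2,2}$, isotropic cones, and null vectors of $\mathbb{R}^{3,3}$. As an alternative formulation one could instead appeal to Proposition \ref{3pointscondition}: pick any two further points $\underline b', \underline b''\in S^\perp$ so that $\underline b,\underline b',\underline b''$ are pairwise non-null-separated (generically possible, since $S^\perp$ is a conic and the non-null-separation condition is an open nondegeneracy constraint on the triple), then $S = C_{\underline b}\cap C_{\underline b'}\cap C_{\underline b''}$, and in particular $\underline a\in C_{\underline b}$, which is exactly $||\underline a-\underline b||^2=0$. The first route via Lemma \ref{distvsprod} is cleaner because it avoids the auxiliary choice of a non-null-separated triple.
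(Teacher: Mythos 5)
Your proof is correct and takes essentially the approach the paper intends: the corollary is stated as an immediate consequence of Proposition \ref{3pointscondition}, and your primary argument (that $\mathbf{s}_{\underline a}\in V^{\perp}$, $\mathbf{s}_{\underline b}\in V$ by Propositions \ref{ndccs2} and \ref{propasssubspace}, hence $(\mathbf{s}_{\underline a},\mathbf{s}_{\underline b})=0$ and Lemma \ref{distvsprod} gives $||\underline a-\underline b||^2=0$) is exactly the dictionary the paper has already set up, while also avoiding the auxiliary choice of a non-null-separated triple needed in the alternative route. No gaps.
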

\vspace{0.1in}

\subsection{Relation to lines of a doubly ruled surface}
\vspace{0.1in}
\begin{Def} A \emph{non-planar doubly ruled surface} is either a one-sheeted hyperboloid or a hyperbolic paraboloid. The two foliations by lines that these surfaces admit are called the two \emph{families of generating lines} of the surface.
\end{Def}

We use the characterization of the lines of a doubly ruled surface from \cite{hcv}. The locus of lines incident to three skew lines in space is a family of lines $L$ of a doubly ruled surface (a one-sheeted hyperboloid when the original lines are in general position, and a hyperbolic paraboloid when they are parallel to a fixed plane). The other family of lines $L^\perp$ is obtained as the locus of lines incident to any three lines from $L$. Note in particular how this means that given any three skew lines in space, there is a unique doubly ruled surface containing those lines.
\vspace{0.1in}

\begin{Prop} \label{3linescondition} Two one-parameter families of lines $L,L^\perp$ in $3$-space are the two families of generating lines of a non-planar doubly ruled surface if and only if for every three skew lines $r,r',r''\in L$ and three skew lines $s,s',s''\in L^\perp$ we have

\begin{center}
$L$ is the locus of lines incident to $s,s'$ and $s''$,
\end{center}
and
\begin{center}
$L^\perp$ is the locus of lines incident to $r,r'$ and $r''$.
\end{center}
\end{Prop}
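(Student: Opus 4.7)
My plan is to reduce the statement directly to the classical characterization of doubly ruled surfaces attributed to \cite{hcv} and recalled in the paragraph preceding the proposition, namely: (a) the locus of lines in $3$-space incident to three mutually skew lines is one family of generating lines of a unique non-planar doubly ruled surface, and (b) the complementary family is obtained as the locus of lines incident to any three pairwise skew members of the first family. Both directions of the proposition are essentially dictionary translations of this fact.

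For the forward direction, I would assume $L,L^\perp$ are the two families of generating lines of a non-planar doubly ruled surface $\Sigma$. Choosing any three skew lines $r,r',r''\in L$, property (b) above immediately gives that the set of lines in $3$-space incident to all three equals $L^\perp$. The symmetric assertion for $L$ in terms of three skew lines $s,s',s''\in L^\perp$ follows by swapping the roles of the two families. Note that the choice of three skew lines is always possible because each family of generators on a one-sheeted hyperboloid or a hyperbolic paraboloid consists of pairwise skew lines.

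For the reverse direction, I would fix any three skew lines $r,r',r''\in L$ (their existence is part of the hypothesis, since the hypothesis quantifies over such triples). By fact (a) there is a unique non-planar doubly ruled surface $\Sigma$ whose one family of generating lines coincides with the locus of lines incident to $r,r',r''$. By the assumed property, this locus is precisely $L^\perp$, so $L^\perp$ is one family of generators of $\Sigma$. Applying fact (b) to any three skew lines $s,s',s''\in L^\perp$ shows that the other family of generators of $\Sigma$ is the locus of lines incident to $s,s',s''$, and by the assumed property again this locus equals $L$. Therefore $L$ and $L^\perp$ are the two families of generating lines of $\Sigma$.

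I do not anticipate any real obstacle: the entire content of the proposition is contained in the incidence characterization of doubly ruled surfaces, and the only minor point to verify is that the existence of a triple of pairwise skew lines in each family is compatible with the hypotheses — in the forward direction this follows from the geometry of hyperboloids and paraboloids, and in the reverse direction it is assumed implicitly by quantifying over such triples. The proof will therefore be short, with the emphasis placed on invoking the uniqueness clause of (a) to identify the surface produced by the three chosen skew lines of $L$ with the one produced by any three chosen skew lines of $L^\perp$.
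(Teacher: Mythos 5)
Your argument is correct and coincides with the paper's treatment: the proposition is presented as an immediate consequence of the Hilbert--Cohn-Vossen incidence characterization recalled in the preceding paragraph (existence and uniqueness of the doubly ruled surface through three skew lines, with the complementary family obtained as the lines meeting any three skew generators), and both of your directions are exactly that dictionary translation. The only caveat --- shared with the paper, and which you already flag --- is that the ``for every three skew lines'' hypothesis must be read as presupposing that such triples exist in $L$ and $L^\perp$, since otherwise the reverse implication would be vacuously satisfied.
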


\vspace{0.1in}
\noindent{\it Proof of Theorem \ref{rulsurf}}

Let $S,S^\perp$ be a pair of non-degenerate conjugate conics in $\mathbb R^{2,2}$ and let $L,L'$ be the lines representing them.
The condition from Proposition \ref{3pointscondition} is equivalent to the condition from Proposition \ref{3linescondition}, as null-separation of points in $\mathbb R^{2,2}$ is the same as incidence of lines they represent in $3$-space. Hence, $S,S^\perp$ are non-degenerate conjugate conics iff they satisfy the condition from Proposition \ref{3pointscondition} iff $L,L^\perp$ satisfy the condition from Proposition \ref{3linescondition} iff $L,L'$ are the two families of a non-planar doubly ruled surface in $3$-space.
\qed
\vspace{0.1in}


\end{document}